\documentclass{article}
\usepackage{amsthm}
\usepackage{amsmath}
\usepackage{amsfonts}

\usepackage{mathtools}
\mathtoolsset{showonlyrefs}

\theoremstyle{plain}
\newtheorem{thm}{Theorem}[section]
\newtheorem{lem}[thm]{Lemma}
\newtheorem{prop}[thm]{Proposition}

\theoremstyle{definition}
\newtheorem{dfn}[thm]{Definition}
\newtheorem{ntn}[thm]{Notation}

\newcommand{\diam}{{\rm diam}}
\newcommand{\interior}{{\rm Int}}
\newcommand{\Con}{{\rm Con}}
\newcommand{\tifs}{\{\phi_\tau\}_{\tau\in T\setminus\{\emptyset\}}}

\makeatletter
\def\blfootnote{\gdef\@thefnmark{}\@footnotetext}
\makeatother

\title{Hausdorff dimension of the limit sets of\\
Tree Iterated Function Systems}
\author{Hiromichi ONO}
\date{February 28, 2026}

\begin{document}

\maketitle

\begin{abstract}
We investigate Tree Iterated Function Systems (TIFSs), which we introduce in this paper. TIFSs are the generalizations of Iterated Function Systems in which we take the maps independently at each step and each block. In this paper, we give the definition of TIFSs and the limit sets of them. We show a formula for the Hausdorff dimension of the limit sets of TIFSs, which is a generalization of Bowen's formula. Moreover, we give an example which emphasizes the difference between TIFSs and non-autonomous IFSs.
\end{abstract}

\section{Introduction and the main results}

\blfootnote{2020 Mathematics Subject Classification. Primary 28A80; Secondary 37C45.

Keywords: fractal, Hausdorff dimension, Tree Iterated Function System, limit set.}

In this paper, we investigate Tree Iterated Function Systems (TIFSs), which we introduce here as the generalization of IFSs. Roughly speaking, TIFSs are the IFSs in which we take the maps independently at each step and each block. The limit sets of TIFSs can be much more complicated than those of usual IFSs, since the self-similarity does not hold any more.

The motivations of considering TIFSs are the following. First, there are many fractal-like objects in nature, such as coastlines or clouds. However, some of them do not have self-similarity. Therefore, it is natural to consider generalized version of IFSs to investigate what can be said about such fractal shapes.

The second motivation is the connection with random fractals. Random fractals are the fractals which are constructed by taking maps randomly at each step. Then, such random fractals can be seen naturally as the limit sets of some TIFSs.

The generalizations of IFSs in this direction are studied in \cite{MR3449231} and \cite{MR2431670}. However, these are far from general settings. TIFSs are the generalizations of these systems (with finite alphabets).

The definition of TIFSs is as follows. Let $T$ be a rooted tree which is finitely splitting and pruned (see section \ref{Sec:Def}). Let $X$ be a compact connected metric space. A TIFS is a collection $\{\phi_\tau\}_{\tau\in T\setminus\{\emptyset\}}$ of maps $\phi_\tau:X\to X$ which are uniformly contractive. For a TIFS $\Psi=\tifs$, we can define the limit set $J$ as the set of all points $x$ in $X$ which can be represented as
\begin{equation}
\{x\}=\bigcap_{n=1}^\infty \phi_{\tau_1}\circ\cdots\circ\phi_{\tau_n}(X)
\end{equation}
for some infinite path $(\emptyset,\tau_1,\tau_2,\ldots)$ in $T$. We simply write $\Phi_{\tau_n}$ for $\phi_{\tau_1}\circ\cdots\circ\phi_{\tau_n}$.

To obtain the dimension formula, we restrict ourselves to a certain class of TIFSs, which we call conformal TIFS (See Definition \ref{dfn:cTIFS}). Here, all maps $\phi_\tau$ are supposed to be conformal. For such systems and for any $t\ge0$, we define $Z_n^*(t)$ by
\begin{equation}
Z_{n}^*(t)=\min_A\left(\sum_{\omega\in A}\|\Phi_\omega'\|^t\right),
\end{equation}
where $A$ ranges over all maximal antichains (see Definition \ref{Def:ma}) in $T^{(\le n)}\setminus\{\emptyset\}=\{\tau\in T\mid 0<|\tau|\le n\}$ (see Notation \ref{Notation}), and $\|\Phi_\omega'\|=\sup_{x\in X}|\Phi_\omega'(x)|$. Here, $|\Phi_\omega'(x)|$ denotes the operator norm of the derivative of $\Phi_\omega$ at $x$. Let $Z_\infty^*(t)=\lim_{n\to\infty}Z_n^*(t)$. We set $\beta^*(\Psi):=\inf\{t\ge0\mid Z_\infty^*(t)=0\}$.

For each $\tau\in T$, let $I_\tau$ be the set of all successors of $\tau$.

We now present the main theorem of this paper.

\begin{thm}[Theorem \ref{Thm:Main}]
Let $\Psi=\tifs$ be a conformal TIFS and $J$ be the limit set. Let $t\ge0$. Then, we have the following.
\begin{itemize}
\item[{\em (i)}] If $Z_\infty^*(t)=0$, then $\dim_H(J)\le t$.
\item[{\em (ii)}] In addition to the assumptions of our theorem, suppose
\begin{equation}\label{Eq:main_cond1}
\lim_{|\tau|\to\infty}\frac1{|\tau|}\log\#I_\tau=0.
\end{equation}
Then, if $Z_\infty^*(t)>0$, we have that $\dim_H(J)\ge t$.
\end{itemize}
Especially, if the condition \eqref{Eq:main_cond1} in {\em (ii)} is satisfied, then $\dim_H(J)=\beta^*(\Psi)$.
\end{thm}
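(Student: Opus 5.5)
For (i) we use the maximal antichains themselves as covers. Each $x\in J$ comes from an infinite path $(\emptyset,\tau_1,\tau_2,\ldots)$ in $T$. Since $T$ is pruned and finitely splitting, any maximal antichain $A$ in $T^{(\le n)}\setminus\{\emptyset\}$ meets every such path. (We take maximality to mean that every vertex of length $\le n$ is comparable to some element of $A$; paths through length-$n$ vertices then hit $A$.) Hence $J\subset\bigcup_{\omega\in A}\Phi_\omega(X)$. By conformality, connectedness of $X$ and the mean value inequality (bounded distortion in the conformal TIFS definition), $\diam\,\Phi_\omega(X)\le C\|\Phi_\omega'\|$. Uniform contraction gives $\|\Phi_\omega'\|\le s^{|\omega|}$ with $s<1$. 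The one subtlety is that an antichain may contain short words, so the covering sets need not be small. We therefore first pick $n$ and an antichain $A$ with $\sum_{\omega\in A}\|\Phi_\omega'\|^t<\epsilon$. Words $\omega\in A$ with $\|\Phi_\omega'\|\ge\delta$ are then finitely many, and in fact absent once $\epsilon<\delta^t$, since each term is at most the sum. So for $\epsilon<\delta^t$ every set in the cover has diameter $\le C\delta$, and $\mathcal H^t_{C\delta}(J)\le C^t\epsilon$. Letting $\epsilon\to0$ and then $\delta\to0$ gives $\mathcal H^t(J)=0$, so $\dim_H J\le t$. (If $t=0$ the statement is trivial or handled the same way.)

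For (ii) we plan a Frostman-type mass distribution. Assume $Z_\infty^*(t)=c>0$; since $Z_n^*$ is nonincreasing in $n$ (an antichain at level $\le n$ is refined at level $n+1$ by the sub-multiplicativity $\|\Phi_{\omega i}'\|\le\|\Phi_\omega'\|\|\phi'\|$), we have $Z_n^*(t)\ge c$ for all $n$. By a max-flow/min-cut duality argument on the finite tree $T^{(\le n)}$, with capacities $\|\Phi_\omega'\|^t$ on vertices, there is a flow $\mu_n$ from the root with total mass $\ge c$. Such a flow satisfies $\mu_n(\omega)\le\|\Phi_\omega'\|^t$ and is additive over successors. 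A compactness argument (diagonal limit as $n\to\infty$, using finite splitting) gives a flow $\mu$ on all of $T$ with $\mu(\emptyset)\ge c$, $\mu(\omega)\le\|\Phi_\omega'\|^t$, and additivity. This defines a Borel measure on the boundary $\partial T$, which we push forward to $J$ by the coding map. The resulting measure $\nu$ is nonzero and satisfies $\nu(\Phi_\omega(X))\ge\mu(\omega)$ in the appropriate cylinder sense, with $\mu(\omega)\le\|\Phi_\omega'\|^t$.

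It remains to bound $\nu(B(x,r))\lesssim r^{t-\epsilon}$ for every $\epsilon>0$. This is the main obstacle, because we have no open set condition. Fix $r$ and let $A_r$ be the antichain of minimal words $\omega$ with $\|\Phi_\omega'\|\le r$. By bounded distortion and the uniform lower contraction bound in the conformal TIFS definition, $\|\Phi_\omega'\|\asymp r$ on $A_r$, and all $\omega\in A_r$ have length between $a\log(1/r)$ and $b\log(1/r)$. The ball $B(x,r)$ is covered by those $\Phi_\omega(X)$, $\omega\in A_r$, that meet it. We need their number to be at most $e^{o(\log 1/r)}$. Here we would use the open set/cone condition built into the definition of a conformal TIFS (we expect one, as with $\Con$ and $\interior$ in the preamble) for a volume-packing count, together with condition \eqref{Eq:main_cond1}. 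The latter bounds the number of children, hence the comparability loss across adjacent generations, by $e^{o(|\tau|)}=r^{-o(1)}$. Granting this, $\nu(B(x,r))\le r^{-o(1)}\cdot r^t$, and the mass distribution principle gives $\dim_H J\ge t-\epsilon$ for all $\epsilon$.

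The final equality $\dim_H J=\beta^*(\Psi)$ follows because $Z_\infty^*$ is nonincreasing in $t$, so (i) and (ii) pinch the dimension from both sides at $\beta^*$.
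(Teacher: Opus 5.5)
Your part (i) is the paper's argument: cover $J$ by $\{X_\omega\}_{\omega\in A}$ for a nearly minimizing maximal antichain $A$, and use $\diam X_\omega\le K\|\Phi_\omega'\|$ from Lemma \ref{Lem:BDP}.

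In (ii), your construction of the measure is a legitimate alternative route. Max-flow/min-cut on $T^{(\le n)}$ is exactly what Theorem \ref{Thm:Gib} establishes by hand, via chains through the minimizing antichains $A_\rho$. Passing to a limit flow on $T$ and pushing it forward by $\pi$ replaces the paper's Lebesgue-spread measures $\mu_{t,n}$, the weak$^*$ limit, and the proof that $\lambda_d(\partial X)=0$. For this to work, bound $\nu(B(x,r))$ through $\pi^{-1}(B(x,r))\subset\bigcup\{[\omega] : \omega\in A_r,\ X_\omega\cap B(x,r)\neq\emptyset\}$. This gives $\nu(B(x,r))\le r^t\cdot\#\{\omega\in A_r : X_\omega\cap B(x,r)\neq\emptyset\}$. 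The inequality $\nu(\Phi_\omega(X))\ge\mu(\omega)$ that you state is a lower bound and is not what is needed. Also, $Z_{n+1}^*\le Z_n^*$ holds because a maximal antichain of $T^{(\le n)}\setminus\{\emptyset\}$ stays maximal in $T^{(\le n+1)}\setminus\{\emptyset\}$; submultiplicativity does not give it.

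The genuine gap is the count of such $\omega$, which you only ``grant''. Your sketch rests on a uniform lower contraction bound, but Definition \ref{dfn:cTIFS} has none. Without it, both $\|\Phi_\omega'\|\asymp r$ on $A_r$ and $|\omega|\ge a\log(1/r)$ are false. For instance, on $X=[0,1]$ let every vertex of height $m$ have $N_m=\lceil 2e^{\sqrt m}\rceil$ children acting by $x\mapsto (x+j)/N_m$. This is a CTIFS with $\frac1{|\tau|}\log\#I_\tau\to 0$. If $r$ is half the length of a height-$m$ interval, all $N_m$ of its children (each of length $2r/N_m$) belong to $A_r$ and meet a single ball of radius $r$, while $|\omega|$ is only of order $(\log(1/r))^{2/3}$. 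So volume packing applied to $\{X_\omega\}_{\omega\in A_r}$ cannot bound the count. Indeed, under a lower contraction bound the OSC would bound $\#I_\tau$ uniformly, and the growth condition would be superfluous.

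The missing idea, which is the content of Lemma \ref{Lem:Dens}, is to pack one generation up:
\begin{enumerate}
\item The parents $p$ of the relevant $\omega$ satisfy $\|\Phi_p'\|>r$, hence $\diam X_p\ge r/K$ and $|p|<\log r/\log s$.
\item These parents need not form an antichain, but their maximal elements do. So Lemma \ref{Lem:Fin}, applied after covering $B(x,r)$ by boundedly many balls of radius $r/K$, bounds the number of maximal elements.
\item Every parent is an ancestor of (or equal to) one of these maximal elements, so there are $O(\log(1/r))$ parents.
\item By the growth condition, each parent has at most $e^{o(\log(1/r))}$ children.
\end{enumerate}
This gives the count $r^{-o(1)}$, hence $\nu(B(x,r))\le r^{t-o(1)}$. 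The paper carries this out with diameter stopping (the sets $W$, $W'$, $B_\tau$). It absorbs the length of the ancestor chains by writing $\|\Phi_a'\|^t\le s^{|a|(t-t')}(K\diam X_a)^{t'}$ and summing a geometric series.
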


The above theorem is an extension of the dimension formula for non-autonomous IFSs (see \cite{MR3449231}). The main difference between the above theorem and the dimension formula for non-autonomous IFSs is that, for TIFSs, we need to consider all maximal antichains in $T^{(\le n)}\setminus\{\emptyset\}$, while it is enough to consider $T^{(n)}$ (see Notation \ref{Notation}) only for non-autonomous IFSs. More rigorously, for any conformal TIFS $\Psi=\tifs$, if we define
\begin{equation}
Z_n(t)=\sum_{\tau\in T^{(n)}} \|\Phi_\tau'\|^t\quad(t\ge0),
\end{equation}
and
\begin{equation}
\beta(\Psi)=\inf\{t\ge0\mid \liminf_{n\to\infty}Z_n(t)=0\},
\end{equation}
then $\beta(\Psi)$ is the real number where the sign of the lower pressure function changes from positive to negative, and we have $\dim_H(J)=\beta(\Psi)$ if $\Psi$ is a non-autonomous IFS (see \cite{MR3449231}). In general, we have $\beta^*(\Psi)\le \beta(\Psi)$ for any conformal TIFS $\Psi$. However, as the following theorem describes, it may happen that $\beta^*(\Psi)<\beta(\Psi)$.

\begin{thm}[see Theorems \ref{Eg:Dim}, \ref{Eg:Zn}]
There exists a conformal TIFS $\Psi=\tifs$ such that
\begin{equation}
\dim_H(J)=\beta^*(\Psi)=\frac{\log2}{\log3}<1=\beta(\Psi).
\end{equation}
\end{thm}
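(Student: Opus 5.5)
We construct the example on $X=[0,1]$, with every map a similarity of ratio $1/3$; this keeps $\|\Phi_\omega'\|=3^{-|\omega|}$ exact and makes conformality automatic. The idea is to build a tree in which every level is as large as for the full ternary Cantor-type system, so that $Z_n(1)$ stays bounded below, while most vertices on each level sit in subtrees that eventually thin out to binary branching. Antichains can cut those subtrees off early, which makes $Z_n^*(t)$ small for every $t>\log2/\log3$.

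\textbf{Construction.} Put maps $\phi_{\tau}$ in the three positions $x/3,\ x/3+1/3,\ x/3+2/3$. Let the tree have a ``spine'' path $\sigma_0=\emptyset,\sigma_1,\sigma_2,\dots$. Each spine vertex $\sigma_k$ has three children: $\sigma_{k+1}$ and two side children, all with the three distinct maps above. A side child born at depth $k+1$ roots a subtree with $3$-ary branching for a block of $L_k$ levels, followed by binary branching forever (left and right maps only). We choose a rapidly increasing sequence $L_k$ and interleave the blocks so that at every depth $n$ the level count satisfies $\#T^{(n)}\ge c\,3^n$. This gives $Z_n(1)\ge c$ for all $n$, hence $\liminf_n Z_n(1)>0$, and $\beta(\Psi)\ge1$. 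Since $\#I_\tau\le 3$, condition \eqref{Eq:main_cond1} holds. Consequently the main theorem gives $\dim_H(J)=\beta^*(\Psi)$, and it remains to compute $\beta^*(\Psi)=\log2/\log3$.

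\textbf{Bounding $\beta^*$.}
\begin{itemize}
\item \emph{Lower bound.} The binary tails contain copies of the middle-thirds Cantor set, so $\dim_H(J)\ge\log2/\log3$. Alternatively, $Z_\infty^*(t)>0$ for $t<\log 2/\log 3$, since every maximal antichain must cover a binary subtree.
\item \emph{Upper bound, $t>\log2/\log3$.} We choose an antichain that cuts each side subtree born at depth $k+1$ exactly at the end of its ternary block, then descends $m$ further binary levels. Such a subtree contributes
\[
3^{-(k+1)t}\,3^{L_k(1-t)}\,\bigl(2\cdot3^{-t}\bigr)^{m}.
\]
The factor $(2\cdot 3^{-t})^m$ tends to $0$ as $m\to\infty$. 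This lets us first make each subtree's contribution smaller than $\varepsilon2^{-k}$, then truncate the spine deep enough that $3^{-nt}$ is also small.
\end{itemize}
Since $Z_n^*(t)$ is nonincreasing in $n$ (antichains at depth $\le n$ remain admissible at larger depths), this yields $Z_\infty^*(t)=0$. Hence $\beta^*(\Psi)=\log 2/\log 3$.

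\textbf{Upper bound $\beta(\Psi)\le1$.} The ternary structure gives $\#T^{(n)}\le 3^n$ trivially, so $Z_n(t)\le 3^{n(1-t)}\to0$ for every $t>1$. Thus $\beta(\Psi)=1$. Open set condition issues do not arise for these upper bounds, and part (ii) of the main theorem handles the lower bound on dimension.

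\textbf{Main obstacle.} The key step is the combinatorial design of the $L_k$ and of the interleaving so that, at every depth $n$, the level count stays comparable to $3^n$. The danger is that early subtrees become binary and thin out the level, driving $\liminf_n Z_n(1)$ to $0$. To prevent this, we would make the $L_k$ grow superexponentially and argue that at each depth $n$ the single youngest still-ternary subtree carries at least a constant fraction of $3^n$ vertices. If one subtree does not suffice, the fallback is to give the spine more children at each step so that the newest block dominates.
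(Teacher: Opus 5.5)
\textbf{The gap is in your lower bound $\beta(\Psi)\ge 1$.} You argue that $\#T^{(n)}\ge c\,3^n$, so that $Z_n(1)\ge c$. This is false for your tree. More than that, no choice of $L_k$ or interleaving can make it true: when every map has ratio $1/3$, it is incompatible with $\beta^*(\Psi)<1$.

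Indeed, suppose $\#T^{(n)}\ge c\,3^n$, and let $A$ be any maximal antichain in $T^{(\le n)}\setminus\{\emptyset\}$. Lemma~\ref{Lem:Cover} gives $c\,3^n\le \#T^{(n)}=\sum_{\omega\in A}\#[\omega]|_n\le \sum_{\omega\in A}3^{n-|\omega|}$, that is, $\sum_{\omega\in A}\|\Phi_\omega'\|\ge c$. Hence $Z_n^*(t)\ge Z_n^*(1)\ge c$ for all $t\le 1$, so $\beta^*(\Psi)\ge 1$. This contradicts your own (correct) cutting-antichain computation, which gives $Z_\infty^*(t)=0$ for $t>\log2/\log3$.

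Concretely, here is why the bound fails in your tree. The two side subtrees at $\sigma_k$ contribute at most $2\cdot 3^{-(k+1)}$ to $\#T^{(n)}3^{-n}$. Each such contribution tends to $0$ once the subtree turns binary, and dominated convergence then gives $\#T^{(n)}3^{-n}\to 0$, however fast $L_k$ grows. So your claim that the youngest ternary subtree carries a constant fraction of $3^n$ vertices is false: a subtree rooted at depth $k+1$ has at most $3^{n-k-1}$ vertices at depth $n$. Your fallback of giving the spine more children is also excluded, both by the open set condition (at most three intervals of length $1/3$ with disjoint interiors fit in $[0,1]$) and by the argument above.

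\textbf{What you actually need, and how to get it.} The bound $\beta(\Psi)\ge1$ only requires $\liminf_n Z_n(t)>0$ for each fixed $t<1$, i.e.\ $\#T^{(n)}\ge 3^{n-o(n)}$. It does not require a uniform bound at $t=1$. Take $L_k$ superlinear, e.g.\ $L_k\ge k^2$. For large $n$, let $k(n)$ be the least $k$ with $k+1+L_k\ge n$; then $k(n)=O(\sqrt n)$. The two side subtrees rooted at depth $k(n)+1$ have not left their ternary block by depth $n$, so $\#T^{(n)}\ge 2\cdot3^{n-k(n)-1}$. Therefore $Z_n(t)\ge 2\cdot 3^{n(1-t)-k(n)-1}\to\infty$ for every $t<1$.

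This is precisely the mechanism of the paper. The paper keeps a binary tree and varies the contraction ratio instead: ratio $1/2$ on $[1^k]$ for levels $n_k\le|\omega|<n_{k+1}$, and $1/3$ elsewhere. Its favored subtree $[1^k]$ has relative weight only $2^{-k}$, and the condition $n_k\ge k/(1-t_k)$ makes this loss subexponential. Accordingly, the paper only proves $Z_n(t)\ge1$ for $t<1$.

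With this correction, the rest of your proposal goes through: the cutting antichain giving $\beta^*\le\log2/\log3$, the Cantor-set lower bound, $\#T^{(n)}\le 3^n$, and the application of the main theorem using $\#I_\tau\le 3$. You then obtain a valid variant of the paper's example in which branching numbers vary instead of contraction ratios.
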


We give the counterexample in section \ref{Sec:Example}.

In section \ref{Sec:Def}, we give the definition of TIFSs, and show some basic properties of TIFSs. In section \ref{Sec:Measure}, we construct nice measures on $T^{(n)}$, which is crucial for proving the main theorem. In section \ref{Sec:Proof}, we prove the main theorem. In section \ref{Sec:Example}, we construct a counterexample which we mentioned above.

\section{Tree Iterated Function Systems}\label{Sec:Def}
In this section, we give the definition of Tree Iterated Function Systems.

We say a partially ordered set $(T,\preceq)$ is a rooted tree if $T$ has a minimum element $\emptyset$ named root and for any $\tau\in T$, $\{\tau'\in T\mid \tau'\preceq\tau\}$ is isomorphic to a partially ordered set $(\{0,\ldots,n\},\le)$ for some $n\ge0$. For $\tau\in T$, the height $|\tau|$ of $\tau$ is the unique number $n$ such that $\{\tau'\in T\mid \tau'\preceq\tau\}$ is isomorphic to $(\{0,\ldots,n\},\le)$.

For the rest of this paper, let $(T,\preceq)$ be a rooted tree.

\begin{dfn}\label{dfn:TIFS}
    Let $(X,d)$ be a compact connected metric space and let $(T,\preceq)$ be a rooted tree which is pruned and finitely splitting (i.e. the number of the successors of each vertex is non-zero and finite). We denote the root of $T$ by $\emptyset$. We say that $\{\phi_\tau:X\to X\}_{\tau\in T\setminus\{\emptyset\}}$ is a Tree Iterated Function System (for short, TIFS) if it satisfies the following condition:
    \begin{itemize}
    \item There exists $s\in (0,1)$ such that, for all $\tau\in T\setminus \{\emptyset\}$ and all $x,y\in X$,
    \[
    d(\phi_\tau(x),\phi_\tau(y))\le sd(x,y).
    \]
    \end{itemize}
\end{dfn}

\begin{ntn}\label{Notation}
We use the following notations.
\begin{itemize}
\item Let $\partial T$ be the set of all simple paths starting from $\emptyset$. For $\tau\in T$ and $\omega\in \partial T$, we write $\tau\preceq \omega$ if $\omega$ passes through $\tau$.
\item For each $\tau\in T$ and $0\le n\le |\tau|$, let $\tau|_n$ be the unique vertex which satisfies $\tau|_n\preceq \tau$ and $|\tau|_n|=n$. For each $\tau\in \partial T$ and $n\in \mathbb Z_{\ge0}$, let $\tau|_n$ be the unique vertex which satisfies $\tau|_n\preceq\tau$ and $|\tau|_n|=n$.
\item For each $\tau\in T$, we set $[\tau]:=\{\omega\in \partial T\mid \tau\preceq\omega\}$.
\item We write $\tau\prec\omega$ or $\omega\succ\tau$ when $\tau\preceq\omega$ and $\tau\ne \omega$. For each $\tau\in T$, we set $I_\tau:=\{\omega\in T\mid \omega\succ\tau,\, |\omega|=|\tau|+1\}$.
\item For each $n\in\mathbb Z_{\ge0}$, we set $T^{(n)}:=\{\tau\in T\mid |\tau|=n\}$, $T^{(\le n)}:=\{\tau\in T\mid |\tau|\le n\}$. For each $n\in \mathbb Z_{\ge0}$ and $\omega\in T$, we set $T_{\succ\omega}^{(\le n)}:=\{\tau\in T\mid |\tau|\le n,\,\tau\succ\omega\}$, $T_{\succeq\omega}^{(\le n)}:=\{\tau\in T\mid |\tau|\le n,\,\tau\succeq\omega\}$. Also, we write $[\omega]|_n=\{\tau\in T^{(n)}\mid \tau\succeq\omega\}$.
\item For each $\tau\in T$, let $\Phi_\tau:=\phi_{\tau_1}\circ\cdots\circ\phi_{\tau_n}$, where $(\emptyset,\tau_1,\ldots,\tau_n)$ is the unique simple path connecting $\emptyset$ and $\tau$. (If $\tau=\emptyset$, let $\Phi_\emptyset={\rm Id}_X$.)
\end{itemize}
\end{ntn}

For each $\tau\in \partial T$, $\{\Phi_{\tau|_n}(X)\}_{n=1}^\infty$ are decreasing compact sets, therefore $\bigcap_{n=1}^\infty\Phi_{\tau|_n}(X)$ is a non-empty compact set. By the uniform contractivity,
\[
\diam(\Phi_{\tau|_n}(X))\le s^{n}\diam(X).
\]
Therefore, $\bigcap_{n=1}^\infty\Phi_{\tau|_n}(X)$ is a one-point set. Hence, we can give the following definition.

\begin{dfn}
    Let $\{\phi_\tau\}_{\tau\in T\setminus \{\emptyset\}}$ be a TIFS. The coding map $\pi:\partial T\to X$ is defined as the map satisfying the following.
    \[
    \bigcap_{n=1}^\infty\Phi_{\tau|_n}(X)=\{\pi(\tau)\}.
    \]

    The limit set $J$ of $\{\phi_\tau\}_{\tau\in T\setminus\{\emptyset\}}$ is defined by $J=\pi(\partial T)$.
\end{dfn}

\begin{prop}
    Let $\{\phi_\tau\}_{\tau\in T\setminus \{\emptyset\}}$ be a TIFS. Then,
    \begin{equation}
        J=\bigcap_{n=1}^\infty\bigcup_{\tau\in T^{(n)}}\Phi_\tau(X).
    \end{equation}

    Especially, $J$ is compact.
\end{prop}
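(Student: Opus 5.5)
We prove the two inclusions separately and then deduce compactness; the only nontrivial step is $\supseteq$, where the pieces at different levels must be made to fit along a single path.

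\emph{Inclusion $\subseteq$.} Let $x=\pi(\omega)$ with $\omega\in\partial T$. For each $n$, the vertex $\omega|_n$ lies in $T^{(n)}$, and by the definition of the coding map $x\in\Phi_{\omega|_n}(X)$. Hence $x$ lies in the right-hand side.

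\emph{Inclusion $\supseteq$.} Fix $x$ in the right-hand side and put
\[
S=\{\tau\in T\mid x\in\Phi_\tau(X)\}.
\]
We show $S$ contains an infinite path from the root, by a K\"onig's lemma argument.
\begin{itemize}
\item $S$ is closed under taking predecessors. Indeed, if $\tau'\preceq\tau$, then $\Phi_\tau=\Phi_{\tau'}\circ(\text{maps})$, so $\Phi_\tau(X)\subseteq\Phi_{\tau'}(X)$.
\item $S\cap T^{(n)}\neq\emptyset$ for every $n$, by the choice of $x$.
\item Consequently, for each $n$ there is a vertex of $S$ at level $n$ whose predecessors all lie in $S$. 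So $S$ is an infinite subtree containing the root.
\item $T$ is finitely splitting, so $S$ is finitely branching.
\end{itemize}
By K\"onig's lemma, $S$ has an infinite branch $\omega\in\partial T$. Explicitly, we choose successively $\omega|_{n+1}\in I_{\omega|_n}\cap S$ so that the subtree of $S$ above it is infinite. This is possible because only finitely many successors share the infinitely many vertices of $S$ above $\omega|_n$.

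Along this branch $x\in\Phi_{\omega|_n}(X)$ for all $n$, so
\[
x\in\bigcap_{n}\Phi_{\omega|_n}(X)=\{\pi(\omega)\}.
\]
Thus $x=\pi(\omega)\in J$.

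\emph{Compactness.} Each $\Phi_\tau(X)$ is a continuous image of the compact space $X$, so it is compact; each map is a contraction and hence continuous. Each $T^{(n)}$ is finite, by finite splitting and induction on $n$. So for each $n$ the union over $T^{(n)}$ is a finite union of compact sets, hence closed. The intersection of these closed sets is closed in the compact space $X$, and therefore $J$ is compact.
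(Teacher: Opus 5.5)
Your proof is correct and follows the paper's argument: $\subseteq$ via the coding map, K\"onig's lemma applied to the tree $\{\tau\in T\mid x\in\Phi_\tau(X)\}$ for $\supseteq$, and compactness as an intersection of finite unions of compact sets. Your explicit check that this set is closed under taking predecessors supplies a detail the paper leaves implicit; K\"onig's lemma needs it to apply.
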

\begin{proof}
    If $x\in J$, then there exists $\omega \in \partial T$ such that $\pi(\omega)=x$. By definition of $\pi$, $x\in \Phi_{\omega|_n}(X)$ for any $n$. Therefore, $x\in\bigcap_{n=1}^\infty\bigcup_{\tau\in T^{(n)}}\Phi_\tau(X)$ and it follows that $J\subset \bigcap_{n=1}^\infty\bigcup_{\tau\in T^{(n)}}\Phi_\tau(X)$.

    To show the converse, let $x\in \bigcap_{n=1}^\infty\bigcup_{\tau\in T^{(n)}}\Phi_\tau(X)$. Let $T_x:=\{\tau\in T\mid x\in\Phi_\tau(X)\}$. Then, $T_x$ is finitely splitting, and has infinitely many vertices. By K\H{o}nig's lemma, there exists a simple path starting from $\emptyset$ with infinite length, therefore, if we set $\omega$ to be such a simple path, then $\omega|_n\in T_x$ for all $n\in \mathbb Z_{\ge0}$. Therefore, $x=\pi(\omega)\in \bigcap_{n=1}^\infty \Phi_{\tau|_n}(X)$ and $x\in J$.

    Finally, since $T^{(n)}$ is finite, $\bigcup_{\tau\in T^{(n)}}\Phi_\tau(X)$ is compact for all $n$. Therefore, $J$ is compact.
\end{proof}

We now give the definition of conformal TIFSs.
\begin{dfn}\label{dfn:cTIFS}
    A TIFS $\{\phi_\tau\}_{\tau\in T\setminus\{\emptyset\}}$ is a conformal TIFS (for short, CTIFS) if the following conditions are satisfied.
    \begin{enumerate}
    \item (Conformality) $X\subset \mathbb R^d$ for some $d$ and there exists an open set $X\subset V\subset \mathbb R^d$ such that for all $\tau\in T\setminus \{\emptyset\}$, $\phi_\tau$ extends to a $C^1$ diffeomorphism on $V$ and is conformal on $V$.
    \item (Open set condition) For all $\tau\in T$ and $\omega,\rho\in I_\tau$, $\phi_\omega(\interior(X))\cap\phi_\rho(\interior(X))=\emptyset$. Here, $\interior(X)$ denotes the interior of $X$ in $\mathbb R^d$.
    \item (Cone condition) There exist $0<\alpha<\pi$ and $l>0$ such that for all $x\in \partial X$, there exists a unit vector $v\in \mathbb R^d$ such that $\Con(x,\alpha,l,v)\subset \interior(X)$. Here, $\Con(X,\alpha,l,v)$ denotes the open cone with vertex $x$, altitude $l$, angle $\alpha$ and direction $v$, i.e.
    \[
    \Con(x,\alpha,l,v)=\{y\in\mathbb R^d\mid 0<\|y-x\|<l,\,(y-x,v)>\cos\alpha\},
    \]
    where $(\cdot,\cdot)$ denotes the inner product in $\mathbb R^d$ and $\|\cdot\|$ denotes the Euclidean norm.
    \item (Bounded distortion property) There exists $K\ge 1$ such that for all $\tau\in T$ and $x,y\in V$, $|\Phi_\tau'(x)|\le K|\Phi_\tau'(y)|$. Here, $|\Phi_\tau'(x)|$ denotes the operator norm of the derivative of $\Phi_\tau$ at $x$.
    \end{enumerate}
\end{dfn}

\begin{dfn}\label{Def:ma}
    Let $(P,\le)$ be a partially ordered set. We say that $A\subset P$ is an antichain in $P$ if for all $x,y\in A$ with $x\ne y$, neither $x\le y$ nor $y\le x$ holds. We say that $A\subset P$ is a maximal antichain in $P$ if $A$ is an antichain which is maximal with respect to set inclusion, i.e. if $A\subset B\subset P$ and $B$ is an antichain, then $A=B$.
\end{dfn}
\begin{dfn}
Let $x,y\in T$. We say that $x$ and $y$ are comparable if either $x\preceq y$ or $y\preceq x$ is satisfied. We say that $x$ and $y$ are incomparable if these are not comparable.
\end{dfn}

Let $[\rho]|_n=\{\tau\in T^{(n)}\mid \tau\succeq\rho\}$ for $\rho\in T$ and $n\ge |\rho|$.
\begin{lem}\label{Lem:Cover}
    Let $\rho\in T$ and $n>|\rho|$. Then, $A\subset T_{\succ\rho}^{(\le n)}$ is a maximal antichain if and only if $[\rho]|_n$ is the disjoint union of $\{[\omega]|_n\}_{\omega\in A}$.
\end{lem}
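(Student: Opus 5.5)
The proof is a direct combinatorial argument on the finite, pruned tree. It rests on one basic fact that I will state first and use in both directions. Since $T$ is pruned, every vertex $\omega$ with $|\omega|\le n$ has at least one descendant at level $n$, so $[\omega]|_n\neq\emptyset$. Moreover, for $\omega,\omega'\in T_{\succ\rho}^{(\le n)}$:
\begin{itemize}
\item if $\omega$ and $\omega'$ are comparable, say $\omega\preceq\omega'$, then $[\omega']|_n\subset[\omega]|_n$, and in particular the two sets intersect;
\item if they are incomparable, then $[\omega]|_n\cap[\omega']|_n=\emptyset$.
\end{itemize}
The second point holds because a common descendant $\tau$ would force $\omega=\tau|_{|\omega|}$ and $\omega'=\tau|_{|\omega'|}$. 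Both lie on the chain $\{\tau'\mid\tau'\preceq\tau\}$, which is totally ordered, so they would be comparable. Finally, $\bigcup_{\omega\in A}[\omega]|_n\subset[\rho]|_n$ always holds, since $\omega\succ\rho$.

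For ($\Rightarrow$), let $A$ be a maximal antichain. By the fact above, the sets $[\omega]|_n$ for $\omega\in A$ are pairwise disjoint, and distinct $\omega$ give nonempty disjoint sets. It remains to show they cover $[\rho]|_n$. Take $\tau\in[\rho]|_n$ and consider the chain $\tau|_{|\rho|+1},\dots,\tau|_n$, which lies in $T_{\succ\rho}^{(\le n)}$. If no element of $A$ were comparable with $\tau$, then $A\cup\{\tau\}$ would be an antichain strictly larger than $A$, contradicting maximality. So some $\omega\in A$ is comparable with $\tau$. Since $|\tau|=n\ge|\omega|$, we get $\omega\preceq\tau$, hence $\tau\in[\omega]|_n$.

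For ($\Leftarrow$), suppose $[\rho]|_n$ is the disjoint union of the $[\omega]|_n$ for $\omega\in A$. I read "disjoint union of the family" as meaning the sets for distinct $\omega$ are pairwise disjoint. Then $A$ is an antichain: if $\omega\prec\omega'$ with both in $A$, then $[\omega']|_n\subset[\omega]|_n$ is a nonempty common subset, which is a contradiction. For maximality, take any $\sigma\in T_{\succ\rho}^{(\le n)}\setminus A$ and pick $\tau\in[\sigma]|_n$, using prunedness. Since $\tau\in[\rho]|_n$, the covering gives some $\omega\in A$ with $\omega\preceq\tau$. Both $\omega$ and $\sigma$ are predecessors of $\tau$, so they are comparable. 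Therefore $A\cup\{\sigma\}$ is not an antichain, and $A$ is maximal.

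The main subtlety is the use of prunedness: it guarantees $[\omega]|_n\neq\emptyset$, which is what makes disjointness imply incomparability and lets every vertex be extended to level $n$. The other point to watch is the interpretation of "disjoint union" as pairwise disjointness indexed by $A$. Everything else is routine bookkeeping with the totally ordered sets of predecessors.
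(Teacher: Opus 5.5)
Your proof is correct and follows the paper's argument essentially step by step. In both directions you use the same three facts: a common level-$n$ descendant forces two vertices to be comparable; maximality together with $|\omega|\le n=|\tau|$ gives $\omega\preceq\tau$; and prunedness lets you extend any vertex to level $n$ for the converse. Your explicit remark that prunedness makes each $[\omega]|_n$ nonempty only spells out a point the paper uses implicitly.
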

\begin{proof}
We first suppose that $A$ is a maximal antichain.

Suppose $\omega,\omega'\in A$ and $[\omega]|_n\cap[\omega']|_n\ne \emptyset$. Then, we can take $\tau\in T^{(n)}$ such that $\omega,\omega'\preceq\tau$, and it follows that $\omega$ and $\omega'$ are comparable. Since $A$ is an antichain, $\omega=\omega'$. 

Suppose $\tau\in [\rho]|_n$. By the maximality of $A$, there exists $\omega\in A$ such that $\tau$ and $\omega$ are comparable. Since $|\omega|\le n=|\tau|$, $\omega\preceq\tau$. Therefore, $\tau\in [\omega]|_n$.

Thus, we have shown that $[\rho]|_n$ is the disjoint union of $\{[\omega]|_n\}_{\omega\in A}$.

We next suppose that $\{[\omega]|_n\}_{\omega\in A}$ is a partition of $[\rho]|_n$.

Let $\omega,\omega'\in A$. Suppose these are comparable, and without loss of generality, suppose $\omega\preceq\omega'$. Since $T$ is pruned, there is $\tau\in T^{(n)}$ such that $\tau\succeq \omega'$. Therefore, $\tau\in[\omega]|_n\cap[\omega']|_n$ and $[\omega]|_n\cap[\omega']|_n\ne\emptyset$. Since $\{[\tau]|_n\}_{\tau\in A}$ are mutually disjoint, $\omega=\omega'$. Therefore, $A$ is an antichain.

To show the maximality of $A$, let $\tau\in T_{\succ\rho}^{(\le n)}$. Then, since $T$ is pruned, there exists $\tau'\in T^{(n)}$ such that $\tau\preceq \tau'$. Since $\tau'\in [\rho]|_n=\bigcup_{\omega\in A}[\omega]|_n$, there is $\omega\in A$ such that $\omega\preceq\tau'$. It follows that $\omega$ and $\tau$ are comparable. Therefore, $A$ is a maximal antichain. Thus, we have proved our lemma.
\end{proof}

\begin{lem}\label{Lem:Rep}
    Let $\rho\in T$, $n>|\rho|$ and $A$ be a subset of $T_{\succ\rho}^{(\le n)}$. Let $\omega\in A$, $|\omega|<n$ and let $B$ be a maximal antichain in $T_{\succeq\omega}^{(\le n)}$. Then, $A$ is a maximal antichain in $T_{\succ\rho}^{(\le n)}$ if and only if $(A\setminus \{\omega\})\cup B$ is a maximal antichain in $T_{\succ\rho}^{(\le n)}$.
\end{lem}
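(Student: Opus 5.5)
The plan is to reduce everything to the partition criterion of Lemma \ref{Lem:Cover}, applied twice: once to $A$ and $(A\setminus\{\omega\})\cup B$ inside $T_{\succ\rho}^{(\le n)}$, and once to $B$ inside the subtree rooted at $\omega$. The key observation is that $B$ is a maximal antichain in $T_{\succeq\omega}^{(\le n)}$, and this set differs from $T_{\succ\omega}^{(\le n)}$ only by the element $\omega$. So there are two cases: either $B=\{\omega\}$, in which case $(A\setminus\{\omega\})\cup B=A$ and the statement is trivial, or $\omega\notin B$. In the latter case $B\subset T_{\succ\omega}^{(\le n)}$.

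In the second case I will check that $B$ is still a maximal antichain in $T_{\succ\omega}^{(\le n)}$. Any antichain in this smaller set that contains $B$ is also an antichain in $T_{\succeq\omega}^{(\le n)}$, so maximality is inherited. Since $|\omega|<n$, Lemma \ref{Lem:Cover} applies with $\rho$ replaced by $\omega$. It gives that $[\omega]|_n$ is the disjoint union of $\{[\beta]|_n\}_{\beta\in B}$.

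Next, by Lemma \ref{Lem:Cover} applied to $\rho$, $A$ is a maximal antichain in $T_{\succ\rho}^{(\le n)}$ if and only if $\{[\alpha]|_n\}_{\alpha\in A}$ partitions $[\rho]|_n$. Likewise $A'=(A\setminus\{\omega\})\cup B$ is one if and only if $\{[\alpha]|_n\}_{\alpha\in A'}$ partitions $[\rho]|_n$. Here I need $A'\subset T_{\succ\rho}^{(\le n)}$, which holds because every element of $B$ is $\succeq\omega\succ\rho$ and has height $\le n$.

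Replacing the single block $[\omega]|_n$ by the blocks $[\beta]|_n$, $\beta\in B$, which partition it, does not change the union. It also preserves pairwise disjointness in both directions: a block $[\alpha]|_n$ with $\alpha\neq\omega$ is disjoint from $[\omega]|_n$ exactly when it is disjoint from every $[\beta]|_n$.

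The one point needing care is that the two families must be compared as indexed families, without coincidences. If $A'$ is a maximal antichain, then $\beta\in B$ cannot equal some $\alpha\in A\setminus\{\omega\}$, since $\beta\succeq\omega$ and would then be comparable with $\omega$, forcing a repeated block. Conversely, if $A$ is a maximal antichain and some $\beta\in B$ were in $A\setminus\{\omega\}$, then $\beta\succ\omega$ with both in $A$, a contradiction. This also rules out the empty-block degenerate case, because $T$ is pruned and so every $[\cdot]|_n$ is nonempty. With this bookkeeping done, the equivalence of the two partition statements follows, and hence so does the lemma.
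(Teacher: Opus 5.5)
\textbf{The reverse direction has a gap, and the statement itself is false in that direction.} Your route is the paper's route. You split off the trivial case $B=\{\omega\}$, use Lemma \ref{Lem:Cover} at $\omega$ to get that $\{[\beta]|_n\}_{\beta\in B}$ partitions $[\omega]|_n$, and then compare the two partition statements at $\rho$ via Lemma \ref{Lem:Cover}. The forward direction is fine, including your check that an antichain $A\ni\omega$ cannot share an element with $B$.

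The gap is the claim that if $A'=(A\setminus\{\omega\})\cup B$ is a maximal antichain, then $B\cap(A\setminus\{\omega\})=\emptyset$. In the nontrivial case $\omega\notin A'$, so $\beta\succeq\omega$ contradicts nothing about $A'$. The ``repeated block'' $[\beta]|_n\subset[\omega]|_n$ lives in the family indexed by $A$, which is exactly the family you are trying to prove is a partition, so the argument assumes its conclusion. In fact the claim fails, and with it the ``if'' direction of the lemma. Take $T=\{0,1\}^{<\mathbb N}$, $\rho=\emptyset$, $n=2$, $A=\{0,00,1\}$, $\omega=0$, $B=\{00,01\}$. Then $B$ is a maximal antichain in $T_{\succeq 0}^{(\le 2)}=\{0,00,01\}$, and $(A\setminus\{\omega\})\cup B=\{00,01,1\}$ is a maximal antichain in $T_{\succ\emptyset}^{(\le 2)}$. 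But $A$ is not even an antichain.

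The paper's own proof has the same hole: it asserts the equivalence of the two partition statements without addressing a possible overlap of $A\setminus\{\omega\}$ and $B$. The fix is to add the hypothesis $(A\setminus\{\omega\})\cap B=\emptyset$. With it, $A'$ is the disjoint union of $A\setminus\{\omega\}$ and $B$, and your block-replacement argument proves both directions. This extra hypothesis costs the paper nothing. The reverse direction is used only once, in the proof of Theorem \ref{Thm:Gib}, with $A=(A_\rho\setminus A_\omega^*)\cup\{\omega\}$ and $B=A_\omega^*$, where the disjointness is built in. The other application there, with $A=A_{\rho_{i-1}}$, $\omega=\rho_i$, $B=A_{\rho_i}$, uses only the forward direction, which is valid as stated.
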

\begin{proof}
Since $B$ is an antichain in $T_{\succeq\omega}^{(\le n)}$, we have that either $B=\{\omega\}$ or $B\subset T_{\succ\omega}^{(\le n)}$. If $B=\{\omega\}$, the claim of our lemma is obvious. Suppose that $B\subset T_{\succ\omega}^{(\le n)}$. Then, by Lemma \ref{Lem:Cover}, $[\omega]|_n$ is the disjoint union of $\{[\tau]|_n\}_{\tau\in B}$. Therefore, $[\rho]|_n$ is the disjoint union of $\{[\tau]|_n\}_{\tau\in A}$ if and only if $[\rho]|_n$ is the disjoint union of $\{[\tau]|_n\}_{\tau\in (A\setminus\{\omega\})\cup B}$. The result follows from Lemma \ref{Lem:Cover}.
\end{proof}

\begin{dfn}\label{dfn:Z*}
    Let $\tifs$ be a CTIFS. For each $\rho\in T$, $n>|\rho|$ and $t>0$, we set
    \begin{equation}\label{Def:Z*}
    Z_{\succ\rho,n}^*(t)=\min\left\{\sum_{\omega\in A}\|\Phi_\omega'\|^t\mid A\text{ is a maximal antichain in }T_{\succ\rho}^{(\le n)}\right\},
    \end{equation}
    \begin{equation}
    Z_n^*(t)=Z_{\succ\emptyset,n}^*(t).
    \end{equation}

    Here, we set
    \begin{equation}
    \|\Phi_\omega'\|=\sup_{x\in X}|\Phi_\omega'(x)|.
    \end{equation}
\end{dfn}

\begin{prop}
    Let $t\ge0$. Then, $Z_n^*(t)$ are monotonically decreasing with respect to $n$.
\end{prop}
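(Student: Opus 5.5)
To show $Z_{n+1}^*(t)\le Z_n^*(t)$, I will take a maximal antichain $A$ in $T^{(\le n)}\setminus\{\emptyset\}=T_{\succ\emptyset}^{(\le n)}$ that attains the minimum in \eqref{Def:Z*}. Such a minimizer exists because $T^{(\le n)}$ is finite, $T$ being finitely splitting. The aim is to produce a maximal antichain $A'$ in $T_{\succ\emptyset}^{(\le n+1)}$ whose sum is no larger than the sum for $A$. The natural choice is $A'=A$ itself, so the sum is literally unchanged.

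The first step is to check that $A$, viewed inside the larger poset $T_{\succ\emptyset}^{(\le n+1)}$, is still a maximal antichain. I will use the partition characterization of Lemma \ref{Lem:Cover}, applied with $\rho=\emptyset$ at both levels $n$ and $n+1$. Since $A$ is maximal in $T_{\succ\emptyset}^{(\le n)}$, the sets $\{[\omega]|_n\}_{\omega\in A}$ partition $T^{(n)}$.

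Next I lift this partition to level $n+1$. Each $\tau\in T^{(n+1)}$ has a unique parent $\tau|_n\in T^{(n)}$. For any $\omega$ with $|\omega|\le n$, we have $\omega\preceq\tau$ if and only if $\omega\preceq\tau|_n$. Hence $[\omega]|_{n+1}$ is exactly the preimage of $[\omega]|_n$ under the map $\tau\mapsto\tau|_n$. Preimages of a partition form a partition, so $\{[\omega]|_{n+1}\}_{\omega\in A}$ partitions $T^{(n+1)}$. Because $T$ is pruned, each of these preimage sets is nonempty, although disjointness and covering are all that the lemma needs. By Lemma \ref{Lem:Cover}, $A$ is therefore a maximal antichain in $T_{\succ\emptyset}^{(\le n+1)}$.

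Combining these steps gives
\[
Z_{n+1}^*(t)\le\sum_{\omega\in A}\|\Phi_\omega'\|^t=Z_n^*(t).
\]
Alternatively, one could use Lemma \ref{Lem:Rep} with $B=\{\omega\}$, but the direct argument above is cleaner. No step is a real obstacle. The only point needing care is the preimage identity $[\omega]|_{n+1}=\{\tau\in T^{(n+1)}\mid\tau|_n\in[\omega]|_n\}$, which follows from the rooted-tree structure, namely that the predecessors of $\tau$ form a chain.
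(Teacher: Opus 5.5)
Your proof is correct and follows the paper's argument. Both reduce the claim to showing that a maximal antichain in $T_{\succ\emptyset}^{(\le n)}$ remains maximal in $T_{\succ\emptyset}^{(\le n+1)}$, so the minimum at level $n+1$ is taken over a larger family. The only difference is in how maximality is checked. The paper does it directly: for $|\omega|=n+1$, some $\tau\in A$ is comparable with $\omega|_n$, hence $\tau\preceq\omega|_n\preceq\omega$. You instead pass the same parent-map observation through the partition criterion of Lemma \ref{Lem:Cover}.
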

\begin{proof}
    It is sufficient to show that if $A$ is a maximal antichain in $T_{\succ\rho}^{(\le n)}$, then $A$ is also a maximal antichain in $T_{\succ\rho}^{(\le n+1)}$.

    Let $A$ be a maximal antichain in $T_{\succ\rho}^{(\le n)}$, and let $\omega$ be an arbitrary element of $T_{\succ\rho}^{(\le n+1)}$. It is sufficient to show that $\omega$ is comparable with some elements of $A$. If $\omega$ is in $T_{\succ\rho}^{(\le n)}$, this is obvious from the maximality of $A$ in $T_{\succ\rho}^{(\le n)}$. Suppose $\omega\in T_{\succ\rho}^{(n+1)}$. In this case, $\omega|_n$ is comparable with some elements $\tau\in A$. Since $|\tau|\le n$, we can see $\tau\preceq \omega|_n\preceq\omega$. Therefore, $\tau$ and $\omega$ are comparable.
\end{proof}
\begin{dfn}
    Let $\Psi=\tifs$ be a CTIFS. We define
    \begin{equation}
    Z_\infty^*(t)=\lim_{n\to\infty}Z_n^*(t),
    \end{equation}
    \begin{equation}
    \beta^*(\Psi)=\inf\{t\ge0\mid Z_\infty^*(t)=0\}.
    \end{equation}
\end{dfn}

The following is the main theorem of this paper.
\begin{thm}\label{Thm:Main}
Let $\Psi=\tifs$ be a CTIFS and $J$ be the limit set. Let $t\ge 0$. Then, we have the following.
\begin{itemize}
\item[{\em (i)}] If $Z_\infty^*(t)=0$, then $\dim_H(J)\le t$.
\item[{\em (ii)}] In addition to the assumptions of our theorem, suppose
\begin{equation}\label{Eq:Cond1}
\lim_{|\tau|\to\infty}\frac1{|\tau|}\log\#I_\tau=0.
\end{equation}
Then, if $Z_\infty^*(t)>0$, we have that $\dim_H(J)\ge t$.
\end{itemize}
Here, $\dim_H(J)$ denotes the Hausdorff dimension of $J$. Especially, if the condition \eqref{Eq:Cond1} in {\em (ii)} is satisfied, then $\dim_H(J)=\beta^*(\Psi)$.
\end{thm}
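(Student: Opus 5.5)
Part (i) is the easy covering argument. Given $\varepsilon>0$, use $Z_\infty^*(t)=0$ to choose $n$ and a maximal antichain $A$ in $T^{(\le n)}\setminus\{\emptyset\}$ with $\sum_{\omega\in A}\|\Phi_\omega'\|^t<\varepsilon$.

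By Lemma \ref{Lem:Cover}, $T^{(n)}$ is the disjoint union of the sets $[\omega]|_n$, $\omega\in A$. Every infinite path passes through some $\tau\in T^{(n)}$, hence through some $\omega\in A$, so $J\subset\bigcup_{\omega\in A}\Phi_\omega(X)$. By conformality and the mean value inequality (using connectedness of $X$, or a quasi-convexity constant $C$ coming from the cone condition), we get $\diam\Phi_\omega(X)\le C\|\Phi_\omega'\|\diam(X)$. Since every $\omega\in A$ has $|\omega|\ge1$ only, the mesh is not yet small. To fix this, first pick $m$ with $s^m\diam X<\delta$, then apply Lemma \ref{Lem:Rep} to replace each $\omega\in A$ of height $<m$ by $[\omega]|_m$. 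This costs at most a bounded factor, $\#[\omega]|_m$ times $\max\|\phi'\|^t$. More cleanly, I would restrict to antichains lying above level $m$, using $Z^*_{\succ\rho,n}$ for $|\rho|=m$ and noting that $Z_n^*\to0$ forces $\sum_{\rho\in T^{(m)}}$-type minima to tend to $0$ as well. Then $\mathcal H^t_\delta(J)\lesssim\varepsilon$, so $\dim_H J\le t$.

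Part (ii) I would do by the mass distribution principle. First I construct a probability measure $\mu$ on $\partial T$ with $\mu([\omega])\le C_\eta\|\Phi_\omega'\|^{t-\eta}$ for all $\omega$, for each small $\eta>0$. The construction goes through the finite levels. Set $c=Z^*_\infty(t)>0$. For each $n$, define $\mu_n$ on $T^{(n)}$ by a max-flow/min-cut duality on the finite tree $T^{(\le n)}$, with capacities $\|\Phi_\omega'\|^t$ at each vertex. A cut is exactly a maximal antichain, by Lemma \ref{Lem:Cover}, so the minimal cut equals $Z_n^*(t)\ge c$. Max-flow/min-cut then gives a unit flow with $\mu_n([\omega]|_n)\le c^{-1}\|\Phi_\omega'\|^t$ for every $\omega$ with $0<|\omega|\le n$. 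Taking a weak-$*$ limit point on the compact space $\partial T$ yields $\mu$ with $\mu([\omega])\le c^{-1}\|\Phi_\omega'\|^t$. I expect this measure-construction step to be the section the paper calls crucial. Its main subtlety is the consistency of the limit, which is handled by compactness, since the cylinders are clopen.

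Next push $\mu$ forward to $\nu=\pi_*\mu$ on $J$, and estimate $\nu(B(x,r))$. For each path hitting $B(x,r)$, stop at the first $\omega$ on the path with $\|\Phi_\omega'\|\le r$. Then by bounded distortion $\|\Phi_\omega'\|\ge r\cdot\min_{\tau}\inf|\phi_\tau'|/K$. The open set condition, together with the cone condition (the standard volume argument, e.g.\ as in the non-autonomous case of \cite{MR3449231}), bounds the number of such stopping vertices $\omega$ whose images $\Phi_\omega(X)$ meet $B(x,r)$, except that siblings are not uniformly bounded in number.

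This is where condition \eqref{Eq:Cond1} enters, and I expect it to be the main obstacle. The stopping vertices come from different parents, and at height $|\omega|$ the parent's branching $\#I_\tau$ may be large, so the contraction of a single step need not be bounded below uniformly. My plan is to use the cone condition to bound by a constant the number of disjoint images $\Phi_\omega(\interior X)$ of diameter comparable to $r$ that meet $B(x,r)$. I would then absorb the size loss from unbounded branching into a factor $e^{o(|\omega|)}$ via \eqref{Eq:Cond1}, and since $|\omega|\lesssim\log(1/r)$, this factor is at most $r^{-\eta}$. The result is $\nu(B(x,r))\le C_\eta r^{t-\eta}$, hence $\dim_H J\ge t-\eta$ for all $\eta>0$.

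The final equality $\dim_H J=\beta^*(\Psi)$ follows from (i) and (ii) together with the monotonicity of $Z^*_\infty$ in $t$.
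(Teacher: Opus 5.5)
Part (i) is correct. The mesh repair is harmless but unnecessary: every summand of $\sum_{\omega\in A}\|\Phi_\omega'\|^t<\varepsilon$ is itself below $\varepsilon$, so $\diam X_\omega\le K\varepsilon^{1/t}$ automatically. The case $t=0$ never arises, since $Z_n^*(0)\ge 1$.

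Your measure construction in (ii) is a genuinely different and valid route. Max-flow/min-cut on $T^{(\le n)}$ with vertex capacities $\|\Phi_\omega'\|^t$ reproduces the bound of Theorem~\ref{Thm:Gib} without the paper's explicit construction through optimal antichains $A_\rho$ and chains. Taking the limit on $\partial T$ and then pushing forward by $\pi$ also spares you the paper's Lebesgue-spread measures $\mu_{t,n}$, the proof that $\lambda_d(\partial X)=0$, and the portmanteau step.

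The gap is in the ball estimate, which is where the real work of (ii) lies.

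\textbf{Why your counting fails.} A CTIFS has no lower bound on $|\phi_\tau'|$: $\inf_\tau\inf_X|\phi_\tau'|$ may be $0$. So your inequality $\|\Phi_\omega'\|\ge r\min_\tau\inf|\phi_\tau'|/K$ is vacuous, and a stopping piece $X_\omega$ may be arbitrarily small compared with $r$. A volume/cone argument then cannot bound the number of stopping pieces meeting $B(x,r)$. Condition \eqref{Eq:Cond1} does not repair this the way you suggest. It limits the number of children, not their contraction ratios, so there is no ``size loss'' of order $e^{-o(|\omega|)}$ to absorb.

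\textbf{What must be counted instead.} The right objects are the parents $p$ of stopping vertices. Each satisfies $\|\Phi_p'\|>r$, hence $\diam X_p\ge r/K$, and each meets $B(x,r)$. These parents are not an antichain. A single branch can contain a chain $p_1\prec p_2\prec\cdots$ of length of order $\log(1/r)$, each link shedding tiny children into the ball. So even the number of parents need not be bounded.

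\textbf{The bookkeeping that works.}
\begin{enumerate}
\item Only the maximal parents form an antichain, and their number is bounded by Lemma~\ref{Lem:Fin} (with constants adjusted).
\item Every parent lies on the ancestor chain of a maximal one, at height $O(\log(1/r))$.
\item Each parent has at most $\#I_p\le C_\epsilon e^{\epsilon|p|}\le C_\eta r^{-\eta}$ stopping children.
\item Each stopping child $a$ has mass $\mu([a])\le c^{-1}\|\Phi_a'\|^t\le c^{-1}r^t$.
\end{enumerate}
Together these give $\nu(B(x,r))\le C_\eta\log(1/r)\,r^{t-\eta}$, which suffices.

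This maximal-parent, ancestor-chain and sibling-count argument is exactly the content of Lemma~\ref{Lem:Dens}. There the chain is summed as a geometric series using $\|\Phi_a'\|^{t}\le s^{|a|(t-t')}(Kr)^{t'}$ and $\#I_\omega\le s^{-\frac12|\omega|(t-t')}$. As written, your proposal counts the wrong objects and does not yet establish the density bound.
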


The proof of Theorem \ref{Thm:Main} is given in section \ref{Sec:Proof}.

\section{Constructing a probability measure}\label{Sec:Measure}
In this section, we prove the existence of nice measures $\mu_{t,n}^*$ on $T^{(n)}$ (Theorem \ref{Thm:Gib}), which we need for proving Theorem \ref{Thm:Main}.

In the following, we fix $t\ge0$ and $n\in\mathbb N$.

\begin{ntn}
For each $\rho\in T$ with $|\rho|<n$, let $A_{\rho}$ be a maximal antichain in $T_{\succ\rho}^{(\le n)}$ which attains the minimum of \eqref{Def:Z*}. In the following, for each $\rho\in T$ with $|\rho|<n$, we fix such $A_\rho$. 
\end{ntn}

\begin{dfn}
We say that a sequence $\rho_0,\rho_1,\ldots,\rho_k$ of $T^{(\le n)}$ is a chain if $\rho_0=\emptyset$ and $\rho_{i+1}\in A_{\rho_i}$ for all $0\le i<k$. We say that a chain $\rho_0,\rho_1,\ldots,\rho_k$ of $T^{(\le n)}$ is a $\tau$-chain if $\rho_k=\tau$. We set
\begin{equation}
    A^*=\{\tau\in T^{(\le n)}\mid \text{there exists a $\tau$-chain $\rho_0,\ldots,\rho_k$}\}.
\end{equation}
\end{dfn}

\begin{lem}\label{Lem:Chain}
    In the above settings, we have the following.
    \begin{enumerate}
    \item For any chain $\rho_0,\ldots,\rho_k$, we have $\rho_0\prec \rho_1\prec\cdots\prec\rho_k$.
    \item For each $\tau\in A^*$, there exists a unique $\tau$-chain.
    \item $T^{(n)}\subset A^*$.
    \end{enumerate}
\end{lem}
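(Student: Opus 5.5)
The three parts will be proved in order, each building on the previous one.

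For part 1, I argue directly from the definition. If $\rho_0,\ldots,\rho_k$ is a chain, then $\rho_{i+1}\in A_{\rho_i}\subset T_{\succ\rho_i}^{(\le n)}$, so $\rho_i\prec\rho_{i+1}$ for each $i$. Transitivity gives the whole strict chain. In particular the heights strictly increase, so $k\le|\rho_k|\le n$. Note also that $A_{\rho_i}$ is only defined when $|\rho_i|<n$. Hence a chain can only be extended from a vertex of height less than $n$, which is consistent with the definition.

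For part 2, I use induction on the length $k$, with uniqueness resting on antichain disjointness. Suppose $\rho_0,\ldots,\rho_k$ and $\rho'_0,\ldots,\rho'_{k'}$ are both $\tau$-chains. By part 1 every $\rho_i$ and every $\rho'_j$ is $\preceq\tau$, so all of them lie on the unique path from $\emptyset$ to $\tau$ and are pairwise comparable.

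I show $\rho_i=\rho'_i$ by induction on $i$. The base case holds since $\rho_0=\rho'_0=\emptyset$. For the inductive step, assume $\rho_i=\rho'_i$. Then $\rho_{i+1}$ and $\rho'_{i+1}$ both belong to the antichain $A_{\rho_i}$ and are comparable, so they are equal.

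Finally, suppose without loss of generality $k\le k'$; then $\rho'_k=\rho_k=\tau$. If $k<k'$, part 1 would give $\tau=\rho'_k\prec\rho'_{k'}=\tau$, a contradiction. So $k=k'$ and the two chains coincide.

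For part 3, the main step, I fix $\tau\in T^{(n)}$ and build a $\tau$-chain greedily. Start with $\rho_0=\emptyset\preceq\tau$. Given $\rho_i\preceq\tau$ with $|\rho_i|<n$, note that $\tau\in[\rho_i]|_n$. By Lemma \ref{Lem:Cover}, $[\rho_i]|_n$ is the disjoint union of $\{[\omega]|_n\}_{\omega\in A_{\rho_i}}$. So there is a unique $\rho_{i+1}\in A_{\rho_i}$ with $\rho_{i+1}\preceq\tau$. By part 1, $|\rho_{i+1}|>|\rho_i|$.

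When $|\rho_i|=n$ we have $\rho_i\preceq\tau$ with equal heights, so $\rho_i=\tau$. Since the heights strictly increase and are bounded by $n$, this happens after at most $n$ steps, and the result is a $\tau$-chain.

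The only delicate point is making sure Lemma \ref{Lem:Cover} applies, which needs $n>|\rho_i|$. That is exactly the condition under which we continue the construction, so I do not expect a real obstacle.
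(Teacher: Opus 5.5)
Your proposal is correct and follows essentially the same route as the paper: uniqueness by induction along the chain, using that $A_{\rho_i}$ is an antichain and every chain element is $\preceq\tau$, and existence by the greedy construction along the path from $\emptyset$ to $\tau$. The differences are minor. In part 3 you invoke Lemma \ref{Lem:Cover} where the paper argues directly from the maximality of $A_{\rho_i}$, which is the same argument that lemma packages. In part 2 you also spell out the check $k=k'$, which the paper leaves implicit.
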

\begin{proof}
    1. is obvious.

    To show 2., let $\rho_0,\ldots,\rho_k$ and $\rho'_0,\ldots,\rho'_{k'}$ be $\tau$-chains, and suppose $\rho_i=\rho'_i\prec \tau$. Then, $\rho_{i+1},\rho'_{i+1}\in A_{\rho_i}$. Since $\rho_{i+1},\rho'_{i+1}\preceq\tau$, these are both comparable. Therefore, since $A_{\rho_i}$ is an antichain, $\rho_{i+1}=\rho'_{i+1}$. By mathematical induction, the proof is done.

    To show 3., let $\tau\in T^{(n)}$. We construct a $\tau$-chain as follows. First, let $\rho_0=\emptyset$. Suppose $\rho_0,\ldots,\rho_i$ are defined as a chain and satisfy $\rho_i\preceq\tau$. If $\rho_i$ is $\tau$, we finish the construction. Otherwise, since $\tau\succ\rho_i$ and $A_{\rho_i}$ is a maximal antichain in $T_{\succ\rho}^{(\le n)}$, $\tau$ is comparable with some $\rho \in A_{\rho_i}$. Since $|\tau|=n$, $\rho\preceq\tau$. We set $\rho_{i+1}=\rho$.

    Repeating the above procedure at most $n$ times, we obtain a $\tau$-chain $\rho_0,\ldots,\rho_k$, and the lemma is proved.
\end{proof}

\begin{dfn}
For each $\tau\in A^*$, we take the $\tau$-chain $\rho_0,\ldots,\rho_k$ and let
\begin{equation}
m^*(\tau)=\prod_{i=0}^{k-1}\frac{\|\Phi_{\rho_{i+1}}'\|^t}{Z_{\succ\rho_i,n}^*(t)}.
\end{equation}
We also set $m^*(\emptyset)=1$.
\end{dfn}

\begin{lem}\label{Lem:Dist}
    For each $\rho\in A^*$,
    \begin{equation}\label{Eq:Dist}
    \sum_{\tau\in[\rho]|_n}m^*(\tau)=m^*(\rho).
    \end{equation}
\end{lem}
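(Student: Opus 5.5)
We argue by backward induction on $|\rho|$, from $|\rho|=n$ down to $|\rho|=0$. The base case $|\rho|=n$ is immediate: then $[\rho]|_n=\{\rho\}$, so \eqref{Eq:Dist} holds trivially. (Since $A^*\subset T^{(\le n)}$, every $\rho\in A^*$ has $|\rho|\le n$.)

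Now let $\rho\in A^*$ with $|\rho|<n$, and assume \eqref{Eq:Dist} for every element of $A^*$ of strictly larger height. The key structural observation is that every $\omega\in A_\rho$ lies in $A^*$. Indeed, if $\rho_0,\ldots,\rho_k=\rho$ is the $\rho$-chain, then $\rho_0,\ldots,\rho_k,\omega$ is an $\omega$-chain. By the uniqueness in Lemma \ref{Lem:Chain}(2), this is \emph{the} $\omega$-chain, so directly from the definition of $m^*$ we get
\begin{equation}
m^*(\omega)=m^*(\rho)\,\frac{\|\Phi_\omega'\|^t}{Z_{\succ\rho,n}^*(t)}.
\end{equation}
The same formula is correct for $\rho=\emptyset$, using the convention $m^*(\emptyset)=1$.

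Next, since $A_\rho$ is a maximal antichain in $T_{\succ\rho}^{(\le n)}$ and $n>|\rho|$, Lemma \ref{Lem:Cover} says that $[\rho]|_n$ is the disjoint union of the sets $[\omega]|_n$ for $\omega\in A_\rho$. Each such $\omega$ satisfies $|\omega|>|\rho|$, so the induction hypothesis applies to it. Combining the partition, the induction hypothesis and the formula above gives
\begin{equation}
\sum_{\tau\in[\rho]|_n}m^*(\tau)=\sum_{\omega\in A_\rho}\sum_{\tau\in[\omega]|_n}m^*(\tau)=\sum_{\omega\in A_\rho}m^*(\omega)=\frac{m^*(\rho)}{Z_{\succ\rho,n}^*(t)}\sum_{\omega\in A_\rho}\|\Phi_\omega'\|^t=m^*(\rho).
\end{equation}
The last equality holds because $A_\rho$ was chosen to attain the minimum in \eqref{Def:Z*}, so its sum is exactly $Z_{\succ\rho,n}^*(t)$.

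The one point that needs care is that $Z_{\succ\rho,n}^*(t)>0$, so that $m^*$ is well defined. This holds because every $\phi_\tau$ is a diffeomorphism, so $\|\Phi_\omega'\|>0$, and every maximal antichain is nonempty: $T_{\succ\rho}^{(\le n)}\neq\emptyset$ because $T$ is pruned. Beyond this, the argument is bookkeeping; the only genuinely substantive step is identifying the $\omega$-chain for $\omega\in A_\rho$ via uniqueness.
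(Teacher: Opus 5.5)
Your proof is correct and follows the paper's argument essentially step for step: backward induction on $|\rho|$, the partition of $[\rho]|_n$ into the sets $[\omega]|_n$, $\omega\in A_\rho$, via Lemma \ref{Lem:Cover}, and extending the $\rho$-chain by $\omega$ to evaluate $m^*(\omega)$. Your explicit appeal to uniqueness of chains (Lemma \ref{Lem:Chain}) and your check that $Z_{\succ\rho,n}^*(t)>0$ are small points the paper leaves implicit.
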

\begin{proof}
    We show the lemma by the backward induction with respect to $|\rho|$. If $|\rho|=n$, \eqref{Eq:Dist} is obvious.

    Let $\rho\in A^*$ and suppose \eqref{Eq:Dist} is correct for all $\rho'\in A^*$ with $|\rho'|>|\rho|$. By Lemma \ref{Lem:Cover} and the induction hypothesis,
    \begin{equation}\label{Eq:Part}
    \sum_{\tau\in[\rho]|_n}m^*(\tau)=\sum_{\omega\in A_{\rho}}\sum_{\tau\in[\omega]|_n}m^*(\tau)=\sum_{\omega\in A_\rho}m^*(\omega).
    \end{equation}

    Let $\rho_0,\ldots,\rho_k$ be a $\rho$-chain. Then, for each $\omega\in A_\rho$, the sequence $\rho_0,\ldots,\rho_k,\omega$ is an $\omega$-chain. Therefore, by definition of $A_\rho$,
    \begin{align}
    \sum_{\omega\in A_\rho}m^*(\omega)&=\sum_{\omega\in A_\rho}\left(\prod_{i=0}^{k-1}\frac{\|\Phi_{\rho_{i+1}}'\|^t}{Z_{\succ\rho_i,n}^*(t)}\right)\cdot\frac{\|\Phi_{\omega}'\|^t}{Z_{\succ\rho,n}^*(t)}\\
    &=\left(\prod_{i=0}^{k-1}\frac{\|\Phi_{\rho_{i+1}}'\|^t}{Z_{\succ\rho_i,n}^*(t)}\right)\cdot\frac{1}{Z_{\succ\rho,n}^*(t)}\sum_{\omega\in A_\rho}\|\Phi_\omega'\|^t\\
    &=m^*(\rho).
    \end{align}
    Together with \eqref{Eq:Part}, we obtain \eqref{Eq:Dist}. By mathematical induction, the proof is done.
\end{proof}

We now prove the existence of nice measures $\mu_{t,n}^*$ on $T^{(n)}$.
\begin{thm}\label{Thm:Gib}
    Let $\tifs$ be a CTIFS. For all $t$ and $n$, there exists a probability measure $\mu_{t,n}^*$ on $T^{(n)}$ such that for all $\omega\in T^{(\le n)}\setminus \{\emptyset\}$,
    \begin{equation}\label{Eq:Ineq}
    \mu_{t,n}^*([\omega]|_n)\le \frac{\|\Phi_\omega'\|^t}{Z_n^*(t)},
    \end{equation}
    where $[\omega]|_n=\{\tau\in T^{(n)}\mid \tau\succeq\omega\}$.
\end{thm}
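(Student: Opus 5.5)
I will take $\mu_{t,n}^*(\{\tau\}):=m^*(\tau)$ for $\tau\in T^{(n)}$. This makes sense because $T^{(n)}\subset A^*$ by Lemma \ref{Lem:Chain}. Applying Lemma \ref{Lem:Dist} with $\rho=\emptyset$ gives $\sum_{\tau\in T^{(n)}}m^*(\tau)=m^*(\emptyset)=1$, so $\mu_{t,n}^*$ is a probability measure. All the quantities $\|\Phi_\omega'\|^t$ are positive since the $\phi_\tau$ are diffeomorphisms, so every $Z_{\succ\rho,n}^*(t)$ is positive and $m^*$ is well defined. It remains to prove \eqref{Eq:Ineq}. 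I will prove the following stronger statement: for every $\rho\in A^*$ with $|\rho|<n$ and every $\omega\in T_{\succ\rho}^{(\le n)}$,
\begin{equation}
\sum_{\tau\in[\omega]|_n}m^*(\tau)\le m^*(\rho)\,\frac{\|\Phi_\omega'\|^t}{Z_{\succ\rho,n}^*(t)}.
\end{equation}
Taking $\rho=\emptyset$ gives \eqref{Eq:Ineq}. The proof is by backward induction on $|\rho|$.

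Fix $\rho$ and $\omega$. By maximality of $A_\rho$, $\omega$ is comparable with some element of $A_\rho$, and there are three cases.

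\emph{Case $\omega\in A_\rho$.} By Lemma \ref{Lem:Dist} the left side equals $m^*(\omega)$. Since the $\rho$-chain extended by $\omega$ is an $\omega$-chain, $m^*(\omega)=m^*(\rho)\|\Phi_\omega'\|^t/Z_{\succ\rho,n}^*(t)$, which is equality.

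\emph{Case $\sigma\prec\omega$ for some $\sigma\in A_\rho$.} Here $\sigma\in A^*$, $|\rho|<|\sigma|<n$ and $\omega\succ\sigma$. The induction hypothesis applied to $\sigma$ gives
\[
\sum_{[\omega]|_n}m^*\le m^*(\sigma)\,\frac{\|\Phi_\omega'\|^t}{Z^*_{\succ\sigma,n}(t)}=m^*(\rho)\,\frac{\|\Phi_\sigma'\|^t}{Z^*_{\succ\rho,n}(t)}\cdot\frac{\|\Phi_\omega'\|^t}{Z^*_{\succ\sigma,n}(t)}.
\]
So it suffices to show $\|\Phi_\sigma'\|^t\le Z^*_{\succ\sigma,n}(t)$. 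By Lemma \ref{Lem:Rep}, $(A_\rho\setminus\{\sigma\})\cup A_\sigma$ is a maximal antichain in $T_{\succ\rho}^{(\le n)}$. Minimality of $A_\rho$ then gives
\[
Z^*_{\succ\rho,n}\le Z^*_{\succ\rho,n}-\|\Phi_\sigma'\|^t+Z^*_{\succ\sigma,n},
\]
which is exactly the required inequality.

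\emph{Case: no element of $A_\rho$ is $\preceq\omega$.} Then every element of $A_\rho$ comparable with $\omega$ lies strictly above it. Let $B=A_\rho\cap T_{\succ\omega}^{(\le n)}$. Since $A_\rho$ partitions $[\rho]|_n$ (Lemma \ref{Lem:Cover}) and every $\tau\in[\omega]|_n$ lies in some $[\sigma]|_n$ with $\sigma\in B$, we get $[\omega]|_n=\bigsqcup_{\sigma\in B}[\sigma]|_n$. Lemma \ref{Lem:Dist} then gives
\[
\sum_{[\omega]|_n}m^*=m^*(\rho)\,\frac{\sum_{\sigma\in B}\|\Phi_\sigma'\|^t}{Z^*_{\succ\rho,n}(t)}.
\]
Moreover $(A_\rho\setminus B)\cup\{\omega\}$ still partitions $[\rho]|_n$, so by Lemma \ref{Lem:Cover} it is a maximal antichain in $T_{\succ\rho}^{(\le n)}$. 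Minimality of $A_\rho$ then yields $\sum_{\sigma\in B}\|\Phi_\sigma'\|^t\le\|\Phi_\omega'\|^t$, which finishes this case.

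The main obstacle is seeing the exchange arguments in the second and third cases. In each, minimality of the chosen $A_\rho$ is compared against a modified antichain, built via Lemma \ref{Lem:Rep} or Lemma \ref{Lem:Cover}. Once that is in place, everything else is bookkeeping using Lemma \ref{Lem:Dist}.
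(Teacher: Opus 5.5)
Your proof is correct and uses the same ingredients as the paper: the measure $\mu_{t,n}^*(\{\tau\})=m^*(\tau)$, the exchange inequality $\|\Phi_\sigma'\|^t\le Z^*_{\succ\sigma,n}(t)$ obtained from Lemma \ref{Lem:Rep}, and the exchange that replaces the part of $A_\rho$ lying above $\omega$ by $\omega$ itself. The only difference is bookkeeping: the paper picks the deepest $\rho\in A^*$ with $\rho\prec\omega$ (your Cases 1 and 3) and then telescopes the product along the $\rho$-chain in one step, whereas your backward induction on the strengthened relative bound does that telescoping one level at a time through Case 2.
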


\begin{proof}
    Let $\mu^*=\mu_{t,n}^*$ be a measure on $T^{(n)}$ defined by
    \begin{equation}
    \mu^*(\{\tau\})=m^*(\tau),\quad \tau\in T^{(n)}.
    \end{equation}
    
    By Lemma \ref{Lem:Dist}, for any $\omega\in A^*$,
    \begin{equation}
    \mu^*([\omega]|_n)=m^*(\omega).
    \end{equation}
    Especially, $\mu^*(T^{(n)})=m^*(\emptyset)=1$, therefore $\mu^*$ is a probability measure.

    To show \eqref{Eq:Ineq}, let $\omega\in T^{(\le n)}\setminus\{\emptyset\}$. Let $\rho\in A^*$ be the element with maximum height such that $\rho\prec\omega$. Let $A_\omega^*=\{\tau\in A_\rho\mid \tau\ge\omega\}$.

    We first show that $A_\omega^*$ is a maximal antichain in $T_{\succeq \omega}^{(\le n)}$. Since $A_\omega^*\subset A_\rho$, $A_\omega^*$ is an antichain. To see the maximality, let $\tau \in T_{\succeq\omega}^{(\le n)}$. Since $A_\rho$ is a maximal antichain in $T_{\succ\rho}^{(\le n)}$ and $\tau\in T_{\succ\rho}^{(\le n)}$, there is $\tau'\in A_\rho$ which is comparable with $\tau$. Then $\tau'$ is also comparable with $\omega$. From the maximality of the height of $\rho$, it follows that $\tau'\not\prec \omega$. Therefore, $\tau'\in A_\omega^*$. Thus, $A_\omega^*$ is a maximal antichain in $T_{\succeq \omega}^{(\le n)}$.

    By lemma \ref{Lem:Cover},
    \begin{equation}\label{Eq:Eval1}
    \mu^*([\omega]|_n)=\sum_{\tau\in[\omega]|_n}m^*(\tau)=\sum_{\tau\in A_{\omega}^*}\sum_{\tau'\in[\tau]|_n}m^*(\tau')=\sum_{\tau\in A_\omega^*}m^*(\tau).
    \end{equation}

    From Lemma \ref{Lem:Rep} (with $A=(A_\rho\setminus A_\omega^*)\cup\{\omega\}$ and $B=A_\omega^*$), $(A_\rho\setminus A_\omega^*)\cup\{\omega\}$ is a maximal antichain in $T_{\succ\rho}^{(\le n)}$. Therefore, by definition of $Z_{\succ\rho,n}^*(t)$, we have
    \begin{equation}
    \sum_{\tau\in A_\rho}\|\Phi_\tau'\|^t\le \sum_{\tau\in (A_\rho\setminus A_\omega^*)\cup\{\omega\}}\|\Phi_\tau'\|^t,
    \end{equation}
    which implies that 
    \begin{equation}
    \sum_{\tau\in A_\omega^*}\|\Phi_\tau'\|^t\le \|\Phi_\omega'\|^t.
    \end{equation}

    Let $\rho_0,\ldots,\rho_k$ be a $\rho$-chain. Then, for each $\tau\in A_\omega^*$, the sequence $\rho_0,\ldots,\rho_k,\tau$ is a $\tau$-chain. Therefore,
    \begin{align}
    \sum_{\tau\in A_\omega^*}m^*(\tau)&=\sum_{\tau\in A_\omega^*}\left(\prod_{i=1}^{k-1}\frac{\|\Phi_{\rho_{i+1}}'\|^t}{Z_{\succ\rho_i,n}^*(t)}\right)\cdot \frac{\|\Phi_{\tau}'\|^t}{Z_{\succ\rho,n}^*(t)}\\
    &\le \left(\prod_{i=0}^{k-1}\frac{\|\Phi_{\rho_{i+1}}'\|^t}{Z_{\succ\rho_i,n}^*(t)}\right)\cdot \frac{\|\Phi_\omega'\|^t}{Z_{\succ\rho,n}^*(t)}.\label{Eq:Eval2}
    \end{align}

    Let $1\le i\le k$. Since $A_{\rho_i}$ is a maximal antichain in $T_{\succ\rho_i}^{(\le n)}$, we have that $A_{\rho_i}$ is a maximal antichain in $T_{\succeq\rho}^{(\le n)}$. Thus, Lemma \ref{Lem:Rep} implies that $(A_{\rho_{i-1}}\setminus \{\rho_{i}\})\cup A_{\rho_{i}}$ is a maximal antichain in $T_{\succ\rho_{i-1}}^{(\le n)}$. Therefore, we have
    \begin{equation}
    \sum_{\tau\in A_{\rho_{i-1}}}\|\Phi_\tau'\|^t\le \sum_{\tau\in(A_{\rho_{i-1}}\setminus\{\rho_i\})\cup A_{\rho_i}}\|\Phi_\tau'\|^t,
    \end{equation}
    which implies that
    \begin{equation}\label{Eq:Eval3}
    \|\Phi_{\rho_i}'\|^t\le \sum_{\tau\in A_{\rho_i}}\|\Phi_\tau'\|^t=Z_{\succ\rho_i,n}^*(t).
    \end{equation}
    Combining \eqref{Eq:Eval1}, \eqref{Eq:Eval2} and \eqref{Eq:Eval3},
    \begin{align}
    \mu^*([\omega]|_n)&\le \left(\prod_{i=0}^{k-1}\frac{\|\Phi_{\rho_{i+1}}'\|^t}{Z_{\succ\rho_i,n}^*(t)}\right)\cdot \frac{\|\Phi_\omega'\|^t}{Z_{\succ\rho,n}^*(t)}\\
    &=\left(\prod_{i=1}^{k}\frac{\|\Phi_{\rho_{i}}'\|^t}{Z_{\succ\rho_i,n}^*(t)}\right)\cdot \frac{\|\Phi_\omega'\|^t}{Z_{\succ\emptyset,n}^*(t)}\\
    &\le \frac{\|\Phi_\omega'\|^t}{Z_{\succ\emptyset,n}^*(t)}.
    \end{align}
    Thus, we have proved our theorem.
\end{proof}

\section{Proof of theorem \ref{Thm:Main}}\label{Sec:Proof}
In this section, we prove Theorem \ref{Thm:Main}.

For simplicity, we use the following notations. For each $\tau\in T$ and $n\in \mathbb Z_{\ge0}$, we set
\begin{equation}
X_\tau =\Phi_\tau(X),
\end{equation}
\begin{equation}
X_n=\bigcup_{\tau\in T^{(n)}}X_\tau.
\end{equation}

We first give the following lemma, which follows from the Bounded Distortion Property. 

\begin{lem}\label{Lem:BDP}
    Let $\tifs$ be a CTIFS. Then, there exists $K\ge 1$ such that, for all $\tau \in T$,
    \begin{equation}\label{Eq:BDP1}
    \diam(X_\tau)\le K\|\Phi_\tau'\|,
    \end{equation}
    \begin{equation}\label{Eq:BDP2}
    \|\Phi_\tau'\|\le K\diam(X_\tau).
    \end{equation}
\end{lem}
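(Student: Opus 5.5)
Both inequalities come from comparing $\Phi_\tau$ with its derivative along suitable paths. The first uses the mean value inequality along polygonal paths in $X$. The second uses the cone condition to find a ball of controlled size inside $X$, whose image under $\Phi_\tau$ has size comparable to $\|\Phi_\tau'\|$. Throughout, the constant $K$ in the statement is allowed to be larger than the bounded distortion constant; call the latter $K_0$. If $\tau=\emptyset$, both inequalities hold trivially once $K\ge\max\{\diam(X),1/\diam(X)\}$ (assume $X$ is not a point). So assume $|\tau|\ge1$.

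For \eqref{Eq:BDP1}, I first show that $X$ has finite "inner diameter". That is, there is $M<\infty$ such that any $x,y\in X$ are joined by a polygonal path in $V$ of length at most $M$. Here is how I would get this. By compactness, choose $r>0$ with the $r$-neighbourhood of $X$ contained in $V$. Cover $X$ by finitely many balls $B(x_j,r)\subset V$. Since $X$ is connected, the graph whose vertices are the balls and whose edges join balls meeting $X$ in a common point is connected. Chaining segments through centres and intersection points then yields a uniform bound $M$, roughly $4r$ times the number of balls. Along such a path $\gamma$ from $x$ to $y$, the mean value inequality gives
\[
\|\Phi_\tau(x)-\Phi_\tau(y)\|\le M\sup_{z\in V}|\Phi_\tau'(z)|\le MK_0\inf_{X}|\Phi_\tau'|\le MK_0\|\Phi_\tau'\|.
\]
Here I use bounded distortion on $V$, and the fact that $\Phi_\tau$ extends as a $C^1$ map on $V$ with $\Phi_\tau(V)\subset V$. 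This last point needs a remark: either each $\phi_\tau$ maps $V$ into $V$, or we restrict the path to a neighbourhood where the composition makes sense. I would note this as the technical point to check in the definition.

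For \eqref{Eq:BDP2}, pick any $x_0\in X$. If $x_0\in\partial X$, the cone condition gives a cone $\Con(x_0,\alpha,l,v)\subset\interior(X)$, and this cone contains a ball $B(y_0,\delta)$ with $\delta=\delta(\alpha,l)>0$ independent of $x_0$. (If $\interior(X)$ meets no boundary point then $X$ is open and compact, which is impossible for a nonempty bounded set in $\mathbb{R}^d$, so a cone exists.) Hence $X$ contains a ball $B=B(y_0,\delta)$. Conformality means $\Phi_\tau'(z)$ is a similarity with ratio $|\Phi_\tau'(z)|$, and $\Phi_\tau$ is a diffeomorphism. Using bounded distortion, I claim $\Phi_\tau(B)$ contains two points at distance at least $c\,\delta\,\|\Phi_\tau'\|$, with $c$ depending only on $K_0$.

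To prove the claim, take $p=y_0$ and $q=y_0+\delta' u$ for small $\delta'\le\delta$ and a unit vector $u$. Write
\[
\Phi_\tau(q)-\Phi_\tau(p)=\int_0^1\Phi_\tau'(p+s\delta' u)\,\delta' u\,ds .
\]
Conformal derivatives can rotate, so I cannot simply bound this integral below. Instead I use the inverse map. $\Phi_\tau^{-1}$ is conformal on $\Phi_\tau(V)$ with derivative norm at most $K_0/\|\Phi_\tau'\|$. Hence if $\Phi_\tau(B)$ had diameter less than $\epsilon$, pulling back along segments in the convex hull gives $\delta\le 2\cdot\epsilon K_0/\|\Phi_\tau'\|$. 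The catch is that the segment must lie in $\Phi_\tau(V)$. This I would handle by a connectivity or invariance-of-domain argument: $\Phi_\tau(B)$ is an open set containing $\Phi_\tau(y_0)$, and I take the largest ball $B(\Phi_\tau(y_0),\epsilon)$ contained in $\Phi_\tau(B)$. Its boundary meets $\Phi_\tau(\partial B)$, and pulling back the radius segment, which lies in $\Phi_\tau(B)$, shows $\delta\le\epsilon K_0/\|\Phi_\tau'\|$. So $\diam(X_\tau)\ge 2\epsilon\ge 2\delta\|\Phi_\tau'\|/K_0$.

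I expect this lower bound to be the main obstacle, because rotation in conformal derivatives prevents a direct integral estimate. Taking $K=\max\{MK_0,K_0/(2\delta),\ldots\}$ then finishes both inequalities.
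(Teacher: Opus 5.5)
Your proposal is correct and follows essentially the same route as the paper. The upper bound comes from chaining through a finite cover of $X$ by convex subsets of $V$ and applying the mean value inequality with bounded distortion. The lower bound comes from taking the largest ball centred at $\Phi_\tau(y_0)$ inside $\Phi_\tau(B)$, where $B$ is a ball in $\interior(X)$, and pulling back a radius with the conformal inverse, whose derivative has norm at most $K_0/\|\Phi_\tau'\|$.

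Two of your choices are slightly more careful than the paper. Pulling back along the radius to the touching point on $\Phi_\tau(\partial B)$ keeps the segment inside $\Phi_\tau(B)$, which the paper's $\varepsilon$-enlargement argument does not guarantee. Using the cone condition to produce the interior ball justifies what the paper takes for granted.
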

\begin{proof}
Let $K'$ be a constant of the Bounded Distortion Property. Let $\{U_i\}_{i=1}^n$ be an open cover of $X$ such that each $U_i$ is convex, $U_i\subset V$ and $\diam(U_i)\le1$. Take $x,y\in X$ arbitrarily. By the connectivity of X, we can take $i_1,\ldots,i_{k+1}$ with $k+1\le n$ and $x_1,\ldots, x_{k}$ such that $x\in U_{i_1}$, $y\in U_{i_k}$, $x_{j}\in U_{i_j}\cap U_{i_{j+1}}$ for each $1\le j\le k$, and $i_j\ne i_{j'}$ for $j\ne j'$. Therefore, if we write $x=x_0$, $y=x_{k+1}$, then for each $\omega\in T$,
\begin{align}
\|\Phi_\omega(y)-\Phi_\omega(x)\|&\le \sum_{j=1}^k\|\Phi_\omega(x_{j+1})-\Phi_\omega(x_{j})\|\\
&\le \sum_{j=1}^kK'\|\Phi_\omega'\|\|x_{j+1}-x_j\|\\
&\le kK'\|\Phi_\omega'\|\\
&\le nK'\|\Phi_\omega'\|.
\end{align}

Letting $K\ge nK'$, we get \eqref{Eq:BDP1}.

To show \eqref{Eq:BDP2}, take $x_0\in X$ and $r>0$ such that $\overline{B(x_0,r)}\subset \interior(X)$. Here, $B(x_0,r)$ is the open ball which is centered at $x_0$ and has a radius $r$. Let $\omega\in T$. Let $R$ be the maximal number such that $B(\Phi_\omega(x_0),R)\subset \Phi_\omega(B(x_0,r))$. Then, for any small $\varepsilon>0$, there exists $x\in B(\Phi_\omega(x_0),R+\varepsilon)$ such that
\begin{equation}
\Phi_\omega^{-1}(x)\in X\setminus B(x_0,r).
\end{equation}
Thus, $\|\Phi_\omega^{-1}(x)-x_0\|>r$. On the other hand,
\begin{equation}
\|\Phi_\omega^{-1}(x)-x_0\|\le \sup_{y\in B(\Phi_\omega(x_0),R+\varepsilon)}|(\Phi_\omega^{-1})'(y)|(R+\varepsilon)\le K'\|\Phi_\omega'\|^{-1}(R+\varepsilon).
\end{equation}
Therefore, $r\le K'\|\Phi_\omega'\|^{-1}(R+\varepsilon)$. Since $\varepsilon>0$ is arbitrary,
\begin{equation}
R\ge (K')^{-1}\|\Phi_\omega'\|r.
\end{equation}
It follows that
\begin{equation}
B(\Phi_{\omega}(x_0),(K')^{-1}\|\Phi_\omega'\|r)\subset \Phi_\omega(B(x_0,r)).
\end{equation}
Hence,
\begin{equation}
\diam(\Phi_\omega(X))\ge \diam(\Phi_\omega(B(x_0,r)))\ge (K')^{-1}\|\Phi_\omega'\|r.
\end{equation}
Letting $K\ge K'r^{-1}$, we get \eqref{Eq:BDP2}.
\end{proof}

We use the following lemma which we can prove similarly as \cite[Lemma 4.2.6]{MR2003772}.

\begin{lem}\label{Lem:Fin}
Let $\tifs$ be a CTIFS. Then, there exists $M\ge 1$ such that the following is satisfied.
\begin{itemize}
\item Let $x\in X$, $r>0$. Let $W\subset T$ be an antichain. Suppose that for each $\tau \in W$, $\diam(X_\tau)\ge r$ and $X_\tau\cap B(x,r)\ne \emptyset$. Then $\# W\le M$.
\end{itemize}

\end{lem}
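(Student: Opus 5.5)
The plan is the usual volume-counting argument, as in \cite[Lemma 4.2.6]{MR2003772}, with the open set condition transported along the tree and the cone condition supplying interior volume near any point of $X_\tau$. First I will show that the sets $\Phi_\tau(\interior(X))$, $\tau\in W$, are pairwise disjoint whenever $W$ is an antichain. Take distinct incomparable $\tau,\tau'\in W$ and let $\rho$ be their longest common ancestor. Their paths leave $\rho$ through distinct successors $\omega\ne\omega'\in I_\rho$. By the open set condition, $\phi_\omega(\interior(X))\cap\phi_{\omega'}(\interior(X))=\emptyset$. Every $\phi$ maps $\interior(X)$ into $\interior(X)$: it maps $X$ into $X$, and as a diffeomorphism of $V$ it is an open map. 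Hence $\Phi_\tau(\interior(X))\subset\Phi_\rho\circ\phi_\omega(\interior(X))$, and similarly for $\tau'$. Since $\Phi_\rho$ is injective on $V$, the two images are disjoint.

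Second, I will produce a lower volume bound. Fix $\tau\in W$ and a point $y\in X$ with $\Phi_\tau(y)\in B(x,r)$. Using the cone condition (or a ball, if $y$ lies deep in the interior), I find a cone or ball inside $\interior(X)$ with vertex or centre within a uniform distance of $y$, of uniform altitude $l'$ and angle $\alpha$. Since $X$ is compact, I shrink $l'$ so that these sets have a uniform size. By conformality and bounded distortion, $\Phi_\tau$ maps this set to a set containing a ball of radius $c\|\Phi_\tau'\|$, up to distortion constants. That ball lies in $\Phi_\tau(\interior(X))$ and within distance $C\|\Phi_\tau'\|$ of $\Phi_\tau(y)$. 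The step I expect to be the main obstacle is making the image of a cone contain a ball of comparable size. I would handle it by choosing a small ball $B(z,\delta)$ inside the cone at distance about $l'/2$ from the vertex, with $\delta$ uniform, and then applying the inner-ball estimate from the proof of \eqref{Eq:BDP2} in Lemma \ref{Lem:BDP} to $B(z,\delta)$. The result is $B(\Phi_\tau(z),(K')^{-1}\|\Phi_\tau'\|\delta)\subset\Phi_\tau(\interior(X))$.

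Third, I will count. Without loss of generality I truncate, since the image balls only need to lie near $x$. By Lemma \ref{Lem:BDP}, $\|\Phi_\tau'\|\ge K^{-1}\diam(X_\tau)\ge K^{-1}r$. When $\|\Phi_\tau'\|$ is much larger than $r$, the ball from the second step may be far from $x$, so I shrink it. The set $\Phi_\tau(\interior(X))$ contains the images of small cones at points near $y$. Alternatively, I will work at scale $r$ directly: take the point of the cone at distance $\asymp r/\|\Phi_\tau'\|$ from $y$, with ball radius proportional to that distance, still inside the cone. Applying bounded distortion to this smaller ball gives a ball of radius $\ge c r$ inside $\Phi_\tau(\interior(X))\cap B(x,Cr)$, with $c,C$ depending only on $K,\alpha,l,\diam X$. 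This requires $r\lesssim\diam(X)\|\Phi_\tau'\|$, which follows from \eqref{Eq:BDP1} if $r\le\diam(X_\tau)$ is interpreted suitably; when $r$ exceeds $K\|\Phi_\tau'\|\diam X$ the hypothesis fails. Finally, the pairwise disjoint balls of radius $cr$ all lie in $B(x,(C+1)r)$, so comparing Lebesgue measures gives $\#W\le((C+1)/c)^d=:M$, independent of $x$, $r$ and $W$.
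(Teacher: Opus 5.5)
Your proof is correct. Its skeleton is the volume-counting argument the paper points to by citing \cite[Lemma 4.2.6]{MR2003772} in place of a proof: transport the open set condition along the tree so that the open sets $\Phi_\tau(\interior(X))$, $\tau\in W$, are pairwise disjoint; find in each a ball of radius $\gtrsim r$ inside a fixed dilate of $B(x,r)$; and compare Lebesgue measure.

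Where you genuinely differ is in how that ball is produced. The Mauldin--Urba\'nski counting lemma is formulated for pieces with $\kappa_1 r\le\diam(X_\tau)\le\kappa_2 r$. Then all of $X_\tau$ lies in $B(x,(1+\kappa_2)r)$, and $\Phi_\tau(B(x_0,r_0))$ from the proof of Lemma \ref{Lem:BDP} already contains a ball of radius $\gtrsim\|\Phi_\tau'\|\gtrsim\kappa_1 r$. That argument needs only $\interior(X)\neq\emptyset$. The statement here has only the lower bound. In Lemma \ref{Lem:Dens}, no bound $\diam(X_\tau)\le\kappa_2 r$ is available for elements of $W'$, because a TIFS has no lower bound on the contraction of the individual $\phi_\tau$.

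Your localization is exactly what makes the one-sided version work, and it is where the cone condition is really used. You take a ball in the cone at distance $\rho\asymp r/\|\Phi_\tau'\|$ from the contact point. You then push it forward with the scale-free estimate $B(\Phi_\tau(z),K^{-1}\|\Phi_\tau'\|\delta)\subset\Phi_\tau(B(z,\delta))$.

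For the write-up, tidy three things:
\begin{itemize}
\item Discard the unit-scale version of your second step and the ``truncation'' remark.
\item State plainly that $\rho\le l/2$ follows from $r\le\diam(X_\tau)\le K\|\Phi_\tau'\|$, by \eqref{Eq:BDP1}.
\item Make the case $y\in\interior(X)$ explicit at scale $\rho$. If $\mathrm{dist}(y,\partial X)\ge\rho$, use $B(y,\rho)\subset\interior(X)$. Otherwise move to a nearest point $y'\in\partial X$; the segment $[y,y']$ lies in $X$. Then use the cone at $y'$, which is star-shaped from its vertex.
\end{itemize}
The mean value inequality along these segments in $X$, with $\|\Phi_\tau'\|=\sup_X|\Phi_\tau'|$, then places the image ball inside $B(x,Cr)$.
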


Finally, we need the following well-known fact. For the proof, see \cite{MR1102677}.

\begin{thm}\label{Thm:Frostman}
Let $J\subset \mathbb R^d$ and let $\mu$ be a probability measure on $J$. If there is a constant $C$ such that for all $x\in J$,
\begin{equation}
\limsup_{r\to0}\frac{\mu(B(x,r))}{r^t}\le C,
\end{equation}
then $\dim_H(J)\ge t$.
\end{thm}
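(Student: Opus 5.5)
The plan is the standard mass distribution argument, made uniform by splitting $J$ into pieces on which the density bound holds at a fixed scale. We may assume $t>0$, since the case $t=0$ is trivial. For each $k\in\mathbb N$ set
\[
J_k=\{x\in J\mid \mu(B(x,r))\le 2Cr^t\text{ for all }0<r<1/k\}.
\]
By hypothesis every $x\in J$ satisfies $\limsup_{r\to0}\mu(B(x,r))/r^t\le C<2C$. Hence every $x\in J$ lies in some $J_k$, so $J=\bigcup_k J_k$ and the union is increasing.

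\emph{Measurability.} For fixed $r$, the map $x\mapsto\mu(B(x,r))$ is lower semicontinuous, because the balls are open. It therefore suffices to impose the defining condition for rational $r$, after replacing $r$ by rationals slightly larger than $r$ and using $(r')^t\downarrow r^t$. Each $J_k$ is then a countable intersection of closed subsets of $J$, hence Borel. By continuity of measure, $\mu(J_k)\to\mu(J)=1$, so we fix $k$ with $\mu(J_k)>1/2$.

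\emph{Lower bound on covers.} Let $\{U_i\}$ be any countable cover of $J_k$ with $\diam(U_i)<1/(4k)$, and discard the $U_i$ that miss $J_k$. For each remaining $U_i$, pick $x_i\in U_i\cap J_k$ and put $r_i=2\diam(U_i)$ when $\diam(U_i)>0$. Then $U_i\subset B(x_i,r_i)$ and $r_i<1/k$, so
\[
\mu(U_i)\le 2C\,2^t\,\diam(U_i)^t.
\]
If $\diam(U_i)=0$, then $U_i=\{x_i\}$ and $\mu(U_i)\le\mu(B(x_i,r))\le 2Cr^t$ for every small $r>0$, so $\mu(U_i)=0$ because $t>0$. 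Summing over $i$ gives
\[
\tfrac12<\mu(J_k)\le\sum_i\mu(U_i)\le 2^{t+1}C\sum_i\diam(U_i)^t.
\]

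\emph{Conclusion.} Since the cover was arbitrary, $\mathcal H^t_\delta(J_k)\ge (2^{t+2}C)^{-1}$ for all $\delta<1/(4k)$. Thus $\mathcal H^t(J_k)>0$, so $\dim_H(J)\ge\dim_H(J_k)\ge t$.

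The only mildly delicate step is the measurability of $J_k$, which the lower semicontinuity argument handles. Alternatively, one can avoid it entirely by working with the outer measure $\mu^*$, which is countably subadditive and continuous along increasing unions.
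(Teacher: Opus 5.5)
Your proof is correct and is essentially the standard mass-distribution argument: decompose $J$ into the increasing sets $J_k$ on which the density bound holds uniformly below a fixed scale, then bound $\mu(J_k)$ using an arbitrary fine cover. This is the proof in Falconer's book, which the paper cites in place of giving its own. The only slips are minor: the step $C<2C$ needs $C>0$, so first replace $C$ by $\max\{C,1\}$ (the hypothesis still holds), and you should apply $\mu$ to the balls $B(x_i,r_i)$ rather than to the possibly non-measurable covering sets $U_i$.
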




We need to construct the following nice probability measures on $X$ to prove Theorem \ref{Thm:Main} (ii).
\begin{dfn}
For each $t\ge0$ and $n\in \mathbb N$, we set,
\begin{equation}\label{Eq:Def_mu}
\mu_{t,n}(A):=\sum_{\tau\in T^{(n)}}\mu_{t,n}^*(\{\tau\})\frac{\lambda_d(A\cap X_\tau)}{\lambda_d(X_\tau)},
\end{equation}
where $\mu_{t,n}^*$ is the probability measure on $T^{(n)}$ in Theorem \ref{Thm:Gib}. Obviously, $\mu_{t,n}$ is a probability measure.
\end{dfn}

\begin{lem}\label{Lem:Gib_X}
Let $\tifs$ be a CTIFS and suppose $Z_\infty^*(t)>0$. Let $\mu_{t,n}$ be the probability measures defined by \eqref{Eq:Def_mu}. Then, for each $n\in\mathbb Z_{\ge 0}$ and $\omega\in T^{(\le n)}$,
\begin{equation} \label{Eq:Gib}
\mu_{t,n}(X_\omega)\le \frac{\|\Phi_\omega'\|^t}{Z_\infty^*(t)}.
\end{equation}
\end{lem}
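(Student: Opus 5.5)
The plan is to compute $\mu_{t,n}(X_\omega)$ directly from \eqref{Eq:Def_mu}. I will bound each summand by the mass of the cylinder $[\omega]|_n$ plus a contribution coming from boundary overlaps. Then I will apply Theorem \ref{Thm:Gib} together with the monotonicity of $Z_n^*(t)$, which gives $Z_n^*(t)\ge Z_\infty^*(t)>0$.

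\textbf{Step 1: identify which $X_\tau$ meet $X_\omega$ in positive measure.} Fix $\omega\in T^{(\le n)}$. I will show that if $\tau\in T^{(n)}$ and $\tau\notin[\omega]|_n$, then $\lambda_d(X_\omega\cap X_\tau)=0$.

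Let $\rho$ be the longest common ancestor of $\omega$ and $\tau$. Since $\tau\not\succeq\omega$ and $|\tau|=n\ge|\omega|$, we have $\rho\prec\omega$ and $\rho\prec\tau$. The successors of $\rho$ on the two paths are distinct elements $\omega',\tau'\in I_\rho$. By the open set condition, $\phi_{\omega'}(\interior X)\cap\phi_{\tau'}(\interior X)=\emptyset$. Applying the injective map $\Phi_\rho$ gives
\[
\Phi_{\omega'}(\interior X)\cap \Phi_{\tau'}(\interior X)=\emptyset .
\]
Since $X_\omega\subset \Phi_{\omega'}(X)$ and $X_\tau\subset\Phi_{\tau'}(X)$, the intersection $X_\omega\cap X_\tau$ is contained in $\Phi_{\omega'}(\partial X)\cup\Phi_{\tau'}(\partial X)$ up to the disjoint interiors.

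It therefore suffices to show $\lambda_d(\partial X)=0$. Diffeomorphisms preserve Lebesgue null sets, so this transfers to the images. The cone condition gives $\lambda_d(\partial X)=0$: at every boundary point, a cone of fixed size lies in $\interior X$, so $\partial X$ has no Lebesgue density points. I expect this measure-zero boundary argument to be the main technical obstacle. It is standard, via the Lebesgue density theorem, since the cone occupies a fixed positive proportion of every small ball around $x$.

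\textbf{Step 2: sum up.} For $\tau\in[\omega]|_n$ we have $X_\tau\subset X_\omega$, so the ratio $\lambda_d(X_\omega\cap X_\tau)/\lambda_d(X_\tau)$ equals $1$. This uses $\lambda_d(X_\tau)>0$, which holds because $\Phi_\tau$ is a diffeomorphism and $\interior X\neq\emptyset$ by the cone condition. For the remaining $\tau$ the ratio is $0$ by Step 1. Hence
\[
\mu_{t,n}(X_\omega)=\mu^*_{t,n}([\omega]|_n).
\]

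For $\omega\neq\emptyset$, Theorem \ref{Thm:Gib} gives
\[
\mu^*_{t,n}([\omega]|_n)\le \|\Phi_\omega'\|^t/Z_n^*(t)\le \|\Phi_\omega'\|^t/Z_\infty^*(t),
\]
because $Z_n^*(t)$ decreases to $Z_\infty^*(t)$. For $n=0$ or $\omega=\emptyset$ the left side is $1$, while the right side is $1/Z_\infty^*(t)$ since $\|\Phi_\emptyset'\|=1$. Here $Z_\infty^*(t)\le Z_1^*(t)\le 1$ by taking the antichain $A=I_\emptyset$ and using contractivity, which gives $\|\phi_\tau'\|\le s<1$. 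So the inequality also holds in this case.
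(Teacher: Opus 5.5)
For $\omega\neq\emptyset$ your argument is essentially the paper's proof. You separate $X_\omega$ and $X_\tau$ by the open set condition at the branching vertex, and get $\lambda_d(\partial X)=0$ from the cone condition and Lebesgue density. This gives $\mu_{t,n}(X_\omega)=\mu_{t,n}^*([\omega]|_n)$, and you finish with Theorem \ref{Thm:Gib} and $Z_n^*(t)\ge Z_\infty^*(t)$. Two small differences are to your credit. Your containment $X_\omega\cap X_\tau\subset\Phi_{\omega'}(\partial X)\cup\Phi_{\tau'}(\partial X)$ is a bit cleaner than the paper's passage through $\interior(X_\omega)\cap\interior(X_\tau)=\emptyset$. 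You also state explicitly that $\lambda_d(X_\tau)>0$, which the definition of $\mu_{t,n}$ needs and the paper leaves implicit.

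Your last paragraph, however, is wrong. We have $Z_1^*(t)=\sum_{\tau\in I_\emptyset}\|\phi_\tau'\|^t$. Contractivity bounds each summand by $s^t$, but it does not bound the sum, so $Z_1^*(t)\le 1$ does not follow. In fact no argument can rescue this case, because \eqref{Eq:Gib} is false for $\omega=\emptyset$.

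Take the middle-third Cantor system: $T=\{0,1\}^{<\mathbb N}$, all maps of ratio $1/3$, and $t=1/2$.
\begin{itemize}
\item Lemma \ref{Lem:Cover} gives $\sum_{\omega\in A}2^{-|\omega|}=1$ for every maximal antichain $A$ in $T^{(\le n)}\setminus\{\emptyset\}$.
\item Since $2\cdot 3^{-1/2}>1$ and $|\omega|\ge 1$, we get $3^{-|\omega|/2}=2^{-|\omega|}(2\cdot 3^{-1/2})^{|\omega|}\ge 2^{-|\omega|}\cdot 2/\sqrt3$.
\item Summing over $A$ gives a lower bound $2/\sqrt3$, with equality for $A=T^{(1)}$. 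Hence $Z_\infty^*(1/2)=2/\sqrt3>0$.
\item But $\mu_{1/2,n}(X_\emptyset)=1>\sqrt3/2=\|\Phi_\emptyset'\|^{1/2}/Z_\infty^*(1/2)$.
\end{itemize}
Even more simply, $Z_\infty^*(0)=\#I_\emptyset$, which exceeds $1$ whenever there are at least two first-level maps.

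The paper's proof tacitly covers only $\omega\neq\emptyset$, since Theorem \ref{Thm:Gib} is stated for $\omega\in T^{(\le n)}\setminus\{\emptyset\}$. That is also all Lemma \ref{Lem:Dens} uses: there the lemma is applied to $a\in B_\omega\subset I_\omega$, so $|a|\ge 1$. You should drop the final paragraph and note that the statement must exclude $\omega=\emptyset$, rather than claim a proof of that case.
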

\begin{proof}
To show \eqref{Eq:Gib}, let $\omega\in T^{(\le n)}$. Let $\tau\in T^{(n)}$. If $\omega\not\preceq\tau$, then there exists $k$ such that $\omega|_k=\tau|_k$ and $\omega|_{k+1}\ne\tau|_{k+1}$. By the Open Set Condition, $\phi_{\omega|_{k+1}}(\interior(X))\cap\phi_{\tau|_{k+1}}(\interior(X))=\emptyset$. Since $\Phi_{\omega|_k}=\Phi_{\tau|_k}$ is injective,
\begin{equation}
\Phi_{\omega|_{k+1}}(\interior(X))\cap\Phi_{\tau|_{k+1}}(\interior(X))=\emptyset.
\end{equation}
Therefore,
\begin{equation}
\Phi_{\omega}(\interior(X))\cap\Phi_{\tau}(\interior(X))\subset \Phi_{\omega|_{k+1}}(\interior(X))\cap\Phi_{\tau|_{k+1}}(\interior(X))=\emptyset.
\end{equation}
Furthermore, since $\Phi_\omega$ and $\Phi_\tau$ are homeomorphisms,
\begin{equation}
\interior(X_\omega)\cap\interior(X_\tau)=\emptyset.
\end{equation}
Therefore,
\begin{equation}
X_\omega\cap X_\tau\subset \partial X_\omega\cup \partial X_\tau.
\end{equation}

Suppose that $\lambda_d(\partial X)>0$. Then, by the Lebesgue's density theorem,
\begin{equation}
\lim_{r\to 0}\frac{\lambda_d(B(x,r)\cap \partial X)}{\lambda_d(B(x,r))}=1\quad {\rm a.e.}\quad x\in\partial X.
\end{equation}
Moreover, by the Cone Condition,
\begin{equation}
\limsup_{r\to0}\frac{\lambda_d(B(x,r)\cap \partial X)}{\lambda_d(B(x,r))}\le 1-\lambda_d(\Con(0,\alpha, 1, u))
\end{equation}
for all $x\in \partial X$. Here, $u$ is an arbitrary unit vector and $\alpha$ is the angle of the Cone Condition. However, this is a contradiction. Therefore, $\lambda_d(\partial X)=0$.

Since $\Phi_\omega,\Phi_\tau$ are diffeomorphisms,
\begin{equation}
\lambda_d(\partial X_\tau)=\lambda_d(\partial X_\omega)=0,
\end{equation}
\begin{equation}
\lambda_d(X_\omega\cap X_\tau)=0.
\end{equation}

If $\omega\preceq\tau$, then $\lambda_d(X_\omega\cap X_\tau)=\lambda_d(X_\tau)$. Therefore, for each $t\ge0$, 
\begin{equation}
\mu_{t,n}(X_\omega)=\sum_{\tau\in T^{(n)}}\mu_{t,n}^*(\{\tau\})\frac{\lambda_d(X_\omega\cap X_\tau)}{\lambda_d(X_\tau)}=\mu_{t,n}^*([\omega]|_n)\le \frac{\|\Phi_\omega'\|^t}{Z_n^*(t)}\le \frac{\|\Phi_\omega'\|^t}{Z_\infty^*(t)}.
\end{equation}

Thus, we have proved our lemma.
\end{proof}

The following lemma gives an upper bound of the density of $\mu_{t,n}$, which enables us to apply Theorem \ref{Thm:Frostman}.

\begin{lem}\label{Lem:Dens}
Let $\tifs$ be a CTIFS, and let $0\le t'<t$. For each $n\in\mathbb N$, let $\mu_{t,n}$ be the probability measure defined by \eqref{Eq:Def_mu}. Suppose that $Z_\infty^*(t)>0$. Suppose also that the condition \eqref{Eq:Cond1} in Theorem \ref{Thm:Main} is satisfied. Then, there exist $C>0$ and $r_0>0$ such that for any $x\in X$ and $0<r\le r_0$, there exists $n$ such that for any $q\ge n$,
\begin{equation}\label{Eq:Dens}
\mu_{t,q}(B(x,r))\le Cr^{t'}.
\end{equation}
\end{lem}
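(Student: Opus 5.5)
We cover $B(x,r)$ by cylinder sets of comparable size, bound their number, and apply Lemma~\ref{Lem:Gib_X} to each one. The only subtlety is that cylinders are cut at a diameter scale. Consequently their parents may be much larger than $r$ when $\#I_\tau$ is large, so the ``comparable size'' property holds only up to a factor that has to be controlled through \eqref{Eq:Cond1}.

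\emph{Step 1: the cover.} Fix $x\in X$ and small $r>0$. Let $W_r$ be the set of $\tau\in T$ such that $\diam(X_\tau)\le r<\diam(X_{\tau|_{|\tau|-1}})$ and $X_\tau\cap B(x,r)\ne\emptyset$. Since $\diam(X_{\tau|_k})\le s^k\diam(X)$, every infinite path eventually enters such a $\tau$, so $W_r$ is an antichain with $|\tau|\le N(r):=\lceil\log(r/\diam X)/\log s\rceil+1$. Choose $n\ge N(r)$. Then for every $q\ge n$ and every $\sigma\in T^{(q)}$ with $X_\sigma\cap B(x,r)$ of positive measure, some element of $W_r$ precedes $\sigma$. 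Hence $B(x,r)\cap X_q\subset\bigcup_{\tau\in W_r}X_\tau$ up to a $\lambda_d$-null set. By the boundary-measure argument in Lemma~\ref{Lem:Gib_X}, $\mu_{t,q}$ is supported on $X_q$, so
\[
\mu_{t,q}(B(x,r))\le\sum_{\tau\in W_r}\mu_{t,q}(X_\tau)\le \frac{1}{Z^*_\infty(t)}\sum_{\tau\in W_r}\|\Phi_\tau'\|^t\le \frac{K^t}{Z^*_\infty(t)}\,\#W_r\, r^t,
\]
where the last inequality uses Lemma~\ref{Lem:BDP}.

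\emph{Step 2: counting $W_r$.} The parents $\hat\tau$ of elements of $W_r$ form a family with $\diam(X_{\hat\tau})>r$ and $X_{\hat\tau}\cap B(x,r)\ne\emptyset$. The maximal elements of this family form an antichain, so by Lemma~\ref{Lem:Fin} there are at most $M$ of them. The parents are not themselves an antichain, however: a parent may be an ancestor of another. I would instead use an area argument. The conformal ball inclusion from the proof of Lemma~\ref{Lem:BDP} shows that each $X_\tau$ contains a ball of radius comparable to $\|\Phi'_\tau\|$. If also $\|\Phi'_\tau\|\gtrsim r\cdot\min$ contraction ratio at that level, then volume packing in $B(x,2r)$, using the disjoint interiors of the antichain $W_r$, gives $\#W_r\lesssim$ the ratio of $r$ to the smallest child scale, raised to the power $d$.

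\emph{Step 3: the obstacle.} Relative to its parent, a child can shrink by an arbitrary factor, and the number of children is only subexponential. Both are controlled by the sub-exponential growth assumption \eqref{Eq:Cond1} combined with uniform contraction, the latter guaranteeing $|\tau|\gtrsim\log(1/r)$ for every $\tau\in W_r$. Concretely, I would bound $\#W_r\le M\cdot\max_{|\hat\tau|\le N(r)}\#I_{\hat\tau}$ times the number of generations involved. This is plausible because of the bounded count of large-parent ancestors along chains: each maximal parent contributes, through chains of parents, at most $\#I$ children per generation, and the chain length from a maximal parent to an element of $W_r$ is bounded by $O(\log(1/r))$. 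Then \eqref{Eq:Cond1} gives $\#W_r\le C_\varepsilon r^{-\varepsilon}$ for every $\varepsilon>0$ once $r\le r_0$. Taking $\varepsilon=t-t'$ yields \eqref{Eq:Dens}. This generation-by-generation count, keeping the multiplicative factor subexponential in $N(r)\asymp\log(1/r)$ rather than exponential, is the step I expect to be hardest.
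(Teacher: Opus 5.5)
Your argument is correct once Step 2 is dropped. The packing count there cannot work, because a CTIFS gives no lower bound on $\|\phi_a'\|$ relative to the parent; you note this yourself. The count in Step 3, which you expect to be hardest, is in fact immediate.

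\begin{itemize}
\item Let $P$ be the set of parents of elements of $W_r$. Every element of $P$ has height $<N(r)$, so each is $\preceq$ some maximal element of $P$.
\item There are at most $M$ maximal elements, by Lemma~\ref{Lem:Fin}.
\item Hence $P$ lies on at most $M$ root paths, each with at most $N(r)$ vertices, and
\[
\#W_r\le M\,N(r)\max_{|\rho|<N(r)}\#I_\rho .
\]
The chain that matters runs from the root to a maximal parent, not from a maximal parent down to $W_r$.
\item By \eqref{Eq:Cond1}, for every $\eta>0$ we have $\#I_\rho\le C_\eta e^{\eta|\rho|}$ for all $\rho$; the finitely many vertices of small height are absorbed into $C_\eta$.
\item Since $N(r)\le \log(\diam X/r)/\log(1/s)+2$, this gives $\#W_r\le C_\varepsilon r^{-\varepsilon}$ uniformly in $x$. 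Your Step 1 then finishes the proof.
\end{itemize}

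\textbf{Comparison with the paper.} The paper uses the same skeleton: it covers by the small children $B_\omega$ of large vertices $\omega$ meeting $B(x,r)$, applies Lemma~\ref{Lem:Gib_X} with Lemma~\ref{Lem:BDP}, and applies Lemma~\ref{Lem:Fin} to the maximal elements $W'$. It spends the gap $t-t'$ differently. You bound every cylinder by $K^t r^t$ and then count. The paper instead writes $\|\Phi_a'\|^t\le s^{|a|(t-t')}(Kr)^{t'}$. It chooses $n_0$ with $\#I_\omega<s^{-\frac12|\omega|(t-t')}$ for $|\omega|\ge n_0$, and $r_0$ so small that every relevant $\omega$ has height $\ge n_0$. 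Each generation along each of the at most $M$ chains then contributes $O\bigl(s^{\frac12|\omega|(t-t')}r^{t'}\bigr)$, and the total is a convergent geometric series. So the paper needs neither the depth bound $N(r)=O(\log(1/r))$ nor the extra factor $N(r)$. Your version is a little more transparent: a uniform bound per cylinder times a cardinality of order $r^{-o(1)}$. Its logarithmic loss is harmless.
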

\begin{proof}
Let $s\in(0,1)$ be the number for $\tifs$ coming from Definition \ref{dfn:TIFS}. Let $n_0\ge 1$ be large enough so that for any $\tau\in T$ with $|\tau|\ge n_0$,
\begin{equation}
\frac1{|\tau|}\log\#I_\tau<-\frac12(t-t')\log s,
\end{equation}
which is equivalent to
\begin{equation}\label{Eq:Exp1}
\#I_\tau<s^{-\frac12|\tau|(t-t')}.
\end{equation}

We set
\begin{equation}
r_0:=\min\{\diam(X_\tau)\mid |\tau|\le n_0\}.
\end{equation}

Take $0<r\le r_0$ and $x\in X$ arbitrarily. Let $W$ be the set of all elements $\tau$ of $T$ which satisfy the following.
\begin{itemize}
\item $\diam(X_\tau)\ge r$.
\item There exists $a\in I_\tau$ such that $X_a\cap B(x,r)\ne\emptyset$ and $\diam(X_a)<r$.
\end{itemize}
Let $W'$ be the set of all maximal elements of $W$ with respect to $\preceq$. For each $\tau\in W'$, let $n(\tau)=\min_{\omega\preceq\tau,\omega\in W}|\omega|$.

Let $n$ be a positive integer such that $s^n\diam(X)<r$. Since $\diam(X_\tau)< s^{|\tau|}\diam(X)$ for any $\tau\in T$, we have that $\diam(X_\tau)<r$ if $|\tau|\ge n$. Therefore, if $\tau\in W$, then $|\tau|<n$. Take $q\ge n$ arbitrarily.

For each $\tau\in T$, let $B_\tau$ be the set defined by
\begin{equation}
B_\tau=\{a\in I_\tau\mid X_\alpha\cap B(x,r)\ne \emptyset,\,\diam(X_a)<r\}.
\end{equation}

Suppose $y\in X_q\cap B(x,r)$. Then there is $\tau\in T^{(q)}$ such that $y\in X_\tau$. Since $\diam(X_\tau)<r$, there is $k$ such that $\diam(X_{\tau|_k})\ge r$ and $\diam(X_{\tau|_{k+1}})<r$. Especially, $\tau|_k\in W$ and $\tau|_{k+1}\in B_{\tau|_k}$. Since the height of each element of $W$ is uniformly bounded, there exists some $\tau'\in W'$ such that $\tau|_k\preceq \tau'$. By definition of $n(\tau')$, we have that $k\ge n(\tau')$. Therefore,
\begin{equation}
y\in \bigcup_{\tau'\in W'}\bigcup_{i=n(\tau')}^{|\tau'|}\bigcup_{a\in B_{\tau'|_{i}}}X_a.
\end{equation}
Therefore,
\begin{equation}
X_q\cap B(x,r)\subset \bigcup_{\tau\in W'}\bigcup_{i=n(\tau)}^{|\tau|}\bigcup_{a\in B_{\tau|_{i}}}X_a.
\end{equation}
Since $\mu_{t,q}$ has a support on $X_q$,
\begin{equation}\label{Eq:Eval_mu1}
\mu_{t,q}(B(x,r))\le \sum_{\tau\in W'}\sum_{i=n(\tau)}^{|\tau|}\sum_{a\in B_{\tau|_i}}\mu_{t,q}(X_a).
\end{equation}

If $\omega\in \bigcup_{\tau\in W'}\{\tau|_{n(\tau)},\ldots,\tau|_{|\tau|}\}$ and $a\in B_\omega$, then $|\omega|<q$ and $|a|\le q$. Therefore, by Lemma \ref{Lem:Gib_X} and Lemma \ref{Lem:BDP},
\begin{align}
\sum_{a\in B_{\omega}}\mu_{t,q}(X_a)
&\le \sum_{a\in B_{\omega}}\frac{\|\Phi_a'\|^t}{Z_\infty^*(t)}\\
&\le \frac1{Z_\infty^*(t)}\sum_{a\in B_{\omega}}\|\Phi_a'\|^{t-t'}\|\Phi_a'\|^{t'}\\
&\le \frac1{Z_\infty^*(t)}\sum_{a\in B_{\omega}}s^{|a|(t-t')}(K\diam(X_a))^{t'}\\
&\le \frac{K^{t'}}{Z_\infty^*(t)}\sum_{a\in B_{\omega}}s^{(|\omega|+1)(t-t')}r^{t'}\\
&\le \frac{K^{t'}}{Z_\infty^*(t)}\#B_{\omega}s^{|\omega|(t-t')}r^{t'}.
\end{align}
Moreover, by \eqref{Eq:Exp1},
\begin{equation}
\#B_\omega\le \#I_\omega\le s^{-\frac12|\omega|(t-t')}.
\end{equation}
Therefore,
\begin{equation}\label{Eq:Eval_mu2}
\sum_{a\in B_{\omega}}\mu_{t,q}(X_a)
\le \frac{K^{t'}}{Z_\infty^*(t)}s^{\frac12|\omega|(t-t')}r^{t'}.
\end{equation}

Since $s^{\frac12(t-t')}<1$, we have
\begin{equation}\label{Eq:Eval_mu3}
\sum_{i=n(\tau)}^{|\tau|}s^{\frac12 i(t-t')}<\sum_{i=1}^\infty s^{\frac12 i(t-t')}=\frac1{1-s^{\frac12(t-t')}}.
\end{equation}

Since $W'$ is an antichain, Lemma \ref{Lem:Fin} implies that
\begin{equation}\label{Eq:Eval_mu4}
\#W'\le M.
\end{equation}
Combining \eqref{Eq:Eval_mu1}, \eqref{Eq:Eval_mu2}, \eqref{Eq:Eval_mu3} and \eqref{Eq:Eval_mu4},
\begin{equation}
\mu_{t,q}(B(x,r))\le \frac{K^{t'}M}{Z_\infty^*(t)(1-s^{\frac12(t-t')})}r^{t'}.
\end{equation}
If we set 
\begin{equation}
C=\frac{K^{t'}M}{Z_\infty^*(t)(1-s^{\frac12(t-t')})},
\end{equation}
then \eqref{Eq:Dens} is satisfied. Thus, we have proved our lemma.
\end{proof}

We now prove Theorem \ref{Thm:Main}.

\begin{proof}[proof of Theorem \ref{Thm:Main}]
    We first prove Theorem \ref{Thm:Main} (i).
    
    Suppose $Z_\infty^*(t)=0$. Take $\varepsilon>0$ arbitrarily. Then, if $n$ is large enough, $Z_n^*(t)<\varepsilon$. Therefore, there is a maximal antichain $A\subset T^{(n)}$ such that,
    \begin{equation}
    \sum_{\tau\in A}\|\Phi_\tau'\|^t<\varepsilon.
    \end{equation}

    Now, Let $x\in J$. Then, there is some $\omega\in \partial T$ such that $x=\pi(\omega)$. Since $A$ is a maximal antichain, there exists $\tau\in A$ such that $\tau\preceq\omega|_n$. Therefore,
    \begin{equation}
    x=\pi(\omega)\in X_{\omega|n}\subset X_\tau.
    \end{equation}
    It is shown that
    \begin{equation}
    J\subset \bigcup_{\tau\in A}X_\tau.
    \end{equation}
    
    By Lemma \ref{Lem:BDP}, $\diam(X_\tau)\le K\|\Phi_\tau'\|$. Therefore,
    \begin{equation}
    H_{K\varepsilon}^t(J)\le \sum_{\tau\in A}(\diam(X_\tau))^t\le \sum_{\tau\in A}K^t\|\Phi_\tau'\|^t\le K^t\varepsilon.
    \end{equation}
    Here, $H_\delta^t$ denotes the $\delta$ approximation of the $t$-dimensional Hausdorff measure. Since we took $\varepsilon$ arbitrarily, the $t$-dimensional Hausdorff measure of $J$ satisfies
    \begin{equation}
        H^t(J)=0.
    \end{equation}
   Therefore, $\dim_H(J)\le t$.

    We now prove Theorem \ref{Thm:Main} (ii).

    Let $t\ge0$, $n\in\mathbb N$. Let $\mu_{t,n}$ be the probability measure defined by \eqref{Eq:Def_mu}. Since the space of all probability measures on $X$ equipped with weak* topology is sequential compact, We can choose a subsequence $\{\mu_{q_k}\}_{k}$ of $\{\mu_{t,n}\}_n$ and a probability measure $\mu$ on $X$ such that $\mu_{q_k}\to \mu$ as $k\to\infty$ in the weak* topology.

    By the portmanteau lemma, for each $n\in\mathbb Z_{\ge 0}$,
    \begin{equation}
    \mu(X_n)\ge \limsup_{k\to\infty}\mu_{q_k}(X_n)=1,
    \end{equation}
    and therefore,
    \begin{equation}
    \mu(J)=\lim_{n\to\infty}\mu(X_n)=1.
    \end{equation}

    Let $0\leq t'<t$. Let $C>0$ and $r_0>0$ be the numbers coming from Lemma \ref{Lem:Dens}.
    Again, by the portmanteau lemma and Lemma \ref{Lem:Dens}, for any $x\in J$ and $0<r\le r_0$,
    \begin{equation}
    \mu(B(x,r))\le \liminf_{k\to\infty}\mu_{q_k}(B(x,r))\le Cr^{t'}.
    \end{equation}

    Therefore, by Theorem \ref{Thm:Frostman}, $\dim_H(J)\ge t'$. Hence, $\dim_H(J)\geq t$.

    Thus, we have proved Theorem \ref{Thm:Main}.
\end{proof}

\section{Example}\label{Sec:Example}

A non-autonomous IFS (with finite alphabet) $\Psi=\tifs$ is a TIFS which satisfies the following.
\begin{itemize}
\item There exists a sequence of finite sets of alphabets $I^{(1)},I^{(2)},\ldots$ such that
\begin{equation}
T=\bigcup_{n=0}^\infty I^n
\end{equation}
where
\begin{equation}
I^n=I^{(1)}\times\cdots\times I^{(n)}.
\end{equation}
Here, we consider $I^0$ as a one-point set with an only element $\emptyset$.
\item For each $n\in \mathbb N$ and $i\in I^{(n)}$, there is a map $\psi_i$ such that for any $\tau=(\omega_1,\ldots,\omega_n)\in I^n$,
\begin{equation}
\phi_\tau=\psi_{\omega_n}
\end{equation}
\end{itemize}

For a non-autonomous conformal IFS $\Psi=\tifs$, we considered the functions
\begin{equation}
Z_n(t)=\sum_{(\omega_1,\ldots,\omega_n)\in I^{(1)}\times\cdots \times I^{(n)}}\|(\phi_{\omega_1}\circ\cdots\circ\phi_{\omega_n})'\|^t
\end{equation}
and the Hausdorff dimensions of the limit sets were obtained by,
\begin{equation}
\dim_H(J)=\inf\{t\ge0\mid \liminf_{n\to\infty}Z_n(t)=0\}
\end{equation}
(see \cite{MR3449231}).
Therefore, one might expect that for any CTIFS $\Psi$, the Hausdorff dimensions of the limit sets of $\Psi$ can be obtained by the following formula.
\begin{equation}\label{Eq:Cand}
\dim_H(J)=\beta(\Psi),
\end{equation}
where
\begin{equation}
\beta(\Psi)=\inf\{t\ge0\mid \liminf_{n\to\infty}Z_n(t)=0\},
\end{equation}
\begin{equation}\label{Eq:Def_Zn}
Z_n(t)=\sum_{\omega\in T^{(n)}}\|\Phi_\omega'\|^t.
\end{equation}

Unfortunately, this is not the case. In general, for any $\Psi$, we have $\beta^*(\Psi)\le \beta(\Psi)$, since $T^{(n)}$ is a maximal antichain in $T^{(n)}\setminus\{\emptyset\}$ for each $n$. However, it may happen that $\beta^*(\Psi)<\beta(\Psi)$. In this section, we give an example of CTIFS $\Psi=\tifs$ such that the exact Hausdorff dimension of the limit set of $\Psi$ (which is equal to $\beta^{\ast}(\Psi)$) is smaller than $\beta(\Psi)$.

Let $X=[0,1]$, and let
\begin{equation}
\begin{cases}
\psi_{0,\frac12}(x)=\frac12 x,&\psi_{1,\frac12}(x)=\frac12x+\frac12\\
\psi_{0,\frac13}(x)=\frac13x,&\psi_{1,\frac13}(x)=\frac13x+\frac23.
\end{cases}
\end{equation}
Let $T=\{0,1\}^{<\mathbb N}$. The order in $T$ is defined by
\begin{equation}
\omega\preceq\tau\quad\Leftrightarrow\quad \text{$\omega$ is an initial segment of $\tau$}.
\end{equation}
For each $k$, we denote by $1^k$ the element of $\{0,1\}^k$ such that all components are $1$. For each $\omega\in \{0,1\}^{<\mathbb N}$, let $i(\omega)=\omega_{|\omega|}$.

Let $\{t_k\}_k$ be a strictly increasing sequence in $(0,1)$ such that,
\begin{equation}
\lim_{k\to\infty}t_k=1.
\end{equation}
Let $\{n_k\}_k$ also be a strictly increasing sequence of natural numbers such that for each k,
\begin{equation}\label{Eq:Cond_nk}
n_k\ge \frac{k}{1-t_k}.
\end{equation}
For convenience, Let $n_0=0$.

For each $\omega\in \{0,1\}^{<\mathbb N}$, define $\phi_\omega$ as follows. Let $k$ be the unique number such that $n_k\le |\omega|<n_{k+1}$. If $1^k\preceq\omega$, let
\begin{equation}
\phi_\omega=\psi_{i(\omega),\frac12},
\end{equation}
and otherwise, let
\begin{equation}
\phi_\omega=\psi_{i(\omega),\frac13}.
\end{equation}

We now prove the following theorems \ref{Eg:Dim} and \ref{Eg:Zn} which describe that there exists a TIFS $\Psi=\tifs$ for which \eqref{Eq:Cand} does not hold.

\begin{thm}\label{Eg:Dim}
Let $\Psi=\tifs$ be as in above. Then, $\Psi$ is a CTIFS, and $\dim_H(J)=\beta^*(\Psi)=\frac{\log2}{\log3}$.
\end{thm}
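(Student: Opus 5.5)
The plan is to verify the CTIFS axioms directly, then compute $\dim_H(J)$ geometrically rather than through $Z_n^*$. Finally, Theorem \ref{Thm:Main} will give $\beta^*(\Psi)=\dim_H(J)$ for free.

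\emph{Step 1: $\Psi$ is a CTIFS satisfying \eqref{Eq:Cond1}.}
\begin{itemize}
\item All four maps $\psi_{i,\frac12},\psi_{i,\frac13}$ are affine contractions of $\mathbb R$ with ratio at most $\frac12$. So uniform contractivity holds with $s=\frac12$, and conformality holds with $V=\mathbb R$.
\item Bounded distortion holds with $K=1$, since every $\Phi_\tau$ is affine and $|\Phi_\tau'|$ is constant.
\item The cone condition is trivial for $X=[0,1]$.
\item For the open set condition, the two children $\omega0,\omega1$ of any $\omega$ carry maps $\psi_{0,a}$ and $\psi_{1,b}$ with $a,b\in\{\frac12,\frac13\}$. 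This is because the ratios could in principle differ at a boundary level where $1^k$ has length exactly $|\omega|+1$. In every case $\psi_{0,a}((0,1))\subset(0,\frac12)$ and $\psi_{1,b}((0,1))\subset(\frac12,1)$, so the interiors are disjoint.
\item Finally $\#I_\tau=2$ for all $\tau$, so \eqref{Eq:Cond1} holds.
\end{itemize}
Hence Theorem \ref{Thm:Main} applies, and $\dim_H(J)=\beta^*(\Psi)$. It remains only to show $\dim_H(J)=\frac{\log2}{\log3}$.

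\emph{Step 2: structure of $\partial T$.} Every infinite path $\omega\in\partial T$ is either $1^\infty$, or it first leaves $1^\infty$ at some level $m$, i.e.\ $\omega|_{m}\ne 1^m$. In the second case, for every $k\ge m$ we have $1^k\not\preceq\omega$. Let $N=N(m)$ be $n_{k}$ for the least $k$ with $n_k\ge m$ and $k\ge m$. Then every vertex $\tau$ with $|\tau|\ge N$ and $\tau\not\succeq 1^{m}$ uses the map $\psi_{i(\tau),\frac13}$. Thus for each $\rho\in T^{(N)}$ with $\rho\not\succeq 1^m$, the subtree below $\rho$ is the full binary tree with the autonomous maps $\psi_{0,\frac13},\psi_{1,\frac13}$. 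So $\pi([\rho])=\Phi_\rho(C)$, where $C$ is the middle-thirds Cantor set.

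\emph{Step 3: conclusion.} From Step 2,
\[
J=\{\pi(1^\infty)\}\cup\bigcup_{m\ge1}\ \bigcup_{\rho\in T^{(N(m))},\,\rho\not\succeq1^m}\Phi_\rho(C),
\]
which is a countable union. Each $\Phi_\rho$ is a nondegenerate affine map, so $\dim_H\Phi_\rho(C)=\dim_H C=\frac{\log2}{\log3}$. Countable stability of Hausdorff dimension then gives $\dim_H(J)=\frac{\log2}{\log3}$; the lower bound follows because $J$ contains one such copy. Combining with Step 1 yields $\beta^*(\Psi)=\frac{\log2}{\log3}$.

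The main point needing care is the bookkeeping in Step 2. One must choose $N(m)$ so that the rule ``use ratio $\frac12$ iff $1^k\preceq\omega$'' gives ratio $\frac13$ at \emph{all} deeper levels off the branch. For this it is essential that $k$ increases with the level and that $n_k\ge k$; the latter follows from \eqref{Eq:Cond_nk}, since $1-t_k<1$.
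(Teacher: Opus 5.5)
Your proof is correct, but it runs in the opposite direction from the paper's.

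The paper never looks at the geometry of $J$. It computes $\beta^*(\Psi)$ from the definition:
\begin{itemize}
\item it gets $Z_n^*(\log2/\log3)\ge 1$ for every $n$ from $\|\Phi_\omega'\|\ge 3^{-|\omega|}$ and Lemma~\ref{Lem:Cover}, since the weights $2^{-|\omega|}$ sum to $1$ over any maximal antichain;
\item it gets $Z_\infty^*(t)=0$ for $t>\log2/\log3$ from the explicit antichain $\{1^k\}\cup\{\omega\in T^{(n+n_k)}\mid 1^k\not\prec\omega\}$, which cuts off the ratio-$\frac12$ branch at $1^k$.
\end{itemize}
Only then does it invoke Theorem~\ref{Thm:Main}, to turn $\beta^*$ into $\dim_H(J)$. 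You instead compute $\dim_H(J)$ directly: $J$ is a point plus countably many similar copies of the middle-thirds Cantor set, so countable stability does everything. You use Theorem~\ref{Thm:Main} only to turn $\dim_H(J)$ into $\beta^*$.

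\emph{What your route buys.} It is more elementary, and it makes $\dim_H(J)=\log2/\log3$ independent of Theorem~\ref{Thm:Main}, in particular of its more delicate part (ii). It also shows plainly why the heavy branch is invisible to dimension: it is a single path in $\partial T$.

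\emph{What the paper's route buys.} Your value of $\beta^*(\Psi)$ is only as good as Theorem~\ref{Thm:Main}, whereas the paper establishes it intrinsically. That intrinsic computation is what displays the mechanism of the example. The minimum over antichains may truncate the branch $1^k$ early, which $Z_n$ cannot do, and this is the contrast with $\beta(\Psi)=1$ in Theorem~\ref{Eg:Zn}. Taken together, the two arguments verify $\dim_H(J)=\beta^*(\Psi)$ on this example without relying on Theorem~\ref{Thm:Main} at all.

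Two small remarks.
\begin{itemize}
\item Your closing comment overstates things. Step~2 only needs every level $\ge n_m$ to have level index $\ge m$, which follows from $(n_k)$ being strictly increasing. So $N(m)=n_m$ works without $n_k\ge k$.
\item Since $t_k>0$, condition \eqref{Eq:Cond_nk} gives $n_k>k$. Hence the two children of a vertex never actually carry different ratios, though your open set condition argument covers that case anyway.
\end{itemize}
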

\begin{proof}
It is easy to check $\tifs$ is a CTIFS.

We first show that
\begin{equation}
Z_{\infty}^*\left(\frac{\log2}{\log3}\right)\ge 1.
\end{equation}

Take $n$ arbitrarily, and let $A$ be a maximal antichain in $T_{\succ\emptyset}^{(\le n)}$. Then,
\begin{equation}
\sum_{\omega\in A}\|\Phi_\omega'\|^{\frac{\log2}{\log3}}\ge \sum_{\omega\in A}\left(\left(\frac13\right)^{|\omega|}\right)^{\frac{\log2}{\log3}}=\sum_{\omega\in A}\left(\frac12\right)^{|\omega|}.
\end{equation}
Moreover, we have that for each $\omega\in A$,
\begin{equation}
\sum_{\tau\in [\omega]|_n}\frac1{2^n}=2^{n-|\omega|}\cdot \frac1{2^n}=\frac1{2^{|\omega|}}.
\end{equation}
Hence, Lemma \ref{Lem:Cover} implies that
\begin{equation}
\sum_{\omega\in A}\left(\frac12\right)^{|\omega|}=\sum_{\omega\in A}\sum_{\tau\in [\omega]|_n}\frac{1}{2^n}=\sum_{\tau\in T^{(n)}}\frac1{2^n}=1.
\end{equation}
Therefore,
\begin{equation}
Z_n^*\left(\frac{\log2}{\log3}\right)\ge 1.
\end{equation}
Letting $n\to\infty$, we obtain that
\begin{equation}
Z_\infty^*\left(\frac{\log2}{\log3}\right)\ge 1.
\end{equation}

We next suppose that $t>\frac{\log2}{\log3}$. Then,
\begin{equation}
\lim_{n\to\infty}2^n\cdot\left(\frac13\right)^{nt}=0.
\end{equation}

Fix $\varepsilon>0$ arbitrarily, and let $k$ be large enough so that
\begin{equation}\label{Eg:Large1}
\left(\frac1{2^k}\right)^t<\frac\varepsilon2.
\end{equation}
Furthermore, let $n$ be large enough so that
\begin{equation}\label{Eg:Large2}
2^n\cdot\left(\frac13\right)^{nt}\cdot 2^{n_k}<\frac\varepsilon2.
\end{equation}

Let $A=\{1^k\}\cup\{\omega\in T^{(n+n_k)}\mid 1^k\not\prec \omega\}$. It is easy to see that $A$ is a maximal antichain.

Suppose that $\omega\in T^{(n+n_k)}$ and $1^k\not\prec\omega$. If $l\ge n_k$, then $\|\phi_{\omega|_l}'\|=\frac13$. Therefore,
\begin{equation}
\|\Phi_\omega'\|\le \left(\frac13\right)^{|\omega|-n_k}=\left(\frac13\right)^n.
\end{equation}
Combining with \eqref{Eg:Large1} and \eqref{Eg:Large2},
\begin{equation}
\sum_{\omega\in A}\|\Phi_\omega'\|^t\le \frac\varepsilon2+\sum_{\omega\in T^{(n+n_k)}}\left(\frac13\right)^{nt}=\frac\varepsilon2+2^{n+n_k}\cdot\left(\frac13\right)^{nt}<\varepsilon.
\end{equation}

Therefore,
\begin{equation}
Z_\infty^*(t)\le Z_{n+n_k}^*(t)\le \varepsilon
\end{equation}
and it follows that $Z_\infty^*(t)=0$.

Using (ii) of Theorem \ref{Thm:Main}, it is concluded that $\dim_H(J)=\beta^*(\Psi)=\frac{\log2}{\log3}$.
\end{proof}

\begin{thm}\label{Eg:Zn}
Let $\Psi=\tifs$ be as above. Then, for all $t<1$, there exists $N$ such that for all $n\ge N$, $Z_n(t)\ge1$. Here, $Z_n$ is defined as \eqref{Eq:Def_Zn}. Especially, $\liminf_{n\to\infty}Z_n(t)\ge 1$ for all $t<1$, and 
\begin{equation}
\dim_H(J)=\beta^*(\Psi)=\frac{\log2}{\log3}<1=\beta(\Psi).
\end{equation}
\end{thm}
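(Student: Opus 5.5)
The plan is to bound $Z_n(t)$ from below using only the single branch through $1^k$, the one on which all maps have ratio $\frac12$ for a long stretch. Fix $t<1$. Since $t_k\to1$, choose $k_0$ with $t_{k_0}>t$. Then for every $k\ge k_0$ we have $t<t_k$, so $1-t>1-t_k$ and, by \eqref{Eq:Cond_nk}, $n_k\ge k/(1-t_k)$.

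First I would take $n$ with $n_k\le n<n_{k+1}$ for some $k\ge k_0$, and look at the $2^{n-k}$ words $\omega\in T^{(n)}$ with $1^k\preceq\omega$. Along such an $\omega$, at each level $l\le n$, with $n_j\le l<n_{j+1}$, the map $\phi_{\omega|_l}$ has ratio $\frac12$ exactly when $1^j\preceq\omega|_l$. Since $j\le k$ and $1^k\preceq\omega$, this holds whenever $l\ge j$. The levels $l<j$ (the case $1^j\not\preceq\omega|_l$ because $\omega|_l$ is too short) can contribute ratio $\frac13$, and this needs care. Because the $n_j$ are strictly increasing with $n_0=0$, we have $n_j\ge j$, so $l\ge n_j\ge j$ always holds. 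Hence every map along $\omega$ up to level $n$ is a $\frac12$-map, and $\|\Phi_\omega'\|=2^{-n}$ exactly, since all the maps are affine.

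This gives
\begin{equation}
Z_n(t)\ge 2^{n-k}\,2^{-nt}=2^{n(1-t)-k}.
\end{equation}
We need $n(1-t)\ge k$. Since $n\ge n_k\ge k/(1-t_k)>k/(1-t)$, this holds, and so $Z_n(t)\ge1$. Set $N=n_{k_0}$. Every $n\ge N$ lies in some interval $[n_k,n_{k+1})$ with $k\ge k_0$, because $n_k\to\infty$. Hence $Z_n(t)\ge1$ for all $n\ge N$.

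For the final claims: $\liminf_n Z_n(t)\ge1$ for all $t<1$ gives $\beta(\Psi)\ge1$. For the reverse inequality, every $\|\Phi_\omega'\|\le 2^{-n}$, so $Z_n(t)\le 2^{n(1-t)}\to0$ for $t>1$, which gives $\beta(\Psi)\le1$. Combined with Theorem \ref{Eg:Dim}, this yields the displayed chain. The only delicate point is the level bookkeeping, namely checking that $1^j\preceq\omega|_l$ for all relevant $l$ via $n_j\ge j$.
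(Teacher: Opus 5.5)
Your proposal is correct and follows essentially the same argument as the paper. You restrict $Z_n(t)$ to the $2^{n-k}$ words of $[1^k]|_n$ and check that every map along them is a $\frac12$-map; your observation $l\ge n_j\ge j$ is exactly the paper's step ``since $k'\le l$''. You then use $n\ge n_k\ge k/(1-t_k)$ to get $Z_n(t)\ge 2^{n(1-t)-k}\ge 1$, and bound $Z_n(t)\le 2^{n(1-t)}\to 0$ for $t>1$, just as the paper does (the paper merely replaces $t$ by $t_k$ in the exponent before invoking $n_k(1-t_k)\ge k$).
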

\begin{proof}
Take $t\in[0,1)$ arbitrarily. Let $k_0$ be large enough so that $t_{k_0}\ge t$. Let $N=n_{k_0}$. Take $n\ge N$ and let $k$ be the unique number satisfying $n_k\le n< n_{k+1}$. Since $n\ge n_{k_0}$, we have that $k\ge k_0$. We obtain that

\begin{equation}
Z_n(t)=\sum_{\omega\in T^{(n)}}\|\Phi_\omega'\|^t\ge \sum_{\omega\in[1^k]|_n}\|\Phi_\omega'\|^t.
\end{equation}

Let $\omega\in [1^k]|_n$. Let $l\le n$ and take $k'$ satisfying $n_{k'}\le l<n_{k'+1}$. Then, since $l\le n$, we have $k'\le k$. Since $k'\le l$,
\begin{equation}
(\omega|_l)|_{k'}=\omega|_{k'}=(\omega|_k)|_{k'}=(1^k)|_{k'}=1^{k'}.
\end{equation}
Therefore, $\omega|_l\succeq1^{k'}$ and $\|\phi_{\omega|_l}'\|=\frac12$. Thus, 
\begin{equation}
\|\Phi_\omega'\|^t=\left(\frac12\right)^{nt}.
\end{equation}

Since $t\le t_{k_0}\le t_k$, using \eqref{Eq:Cond_nk} we obtain that
\begin{equation}
Z_n(t)\ge \sum_{\omega\in[1^k]|_n}\left(\frac12\right)^{nt}\ge 2^{n-k}\cdot \left(\frac12\right)^{nt_k}=2^{n(1-t_k)-k}\ge 2^{n_k(1-t_k)-k}\ge 1.
\end{equation}

Therefore, $\liminf_{n\to\infty}Z_n(t)\ge1$. On the other hand, let $t>1$. Then, we have
\begin{equation}
Z_n(t)=\sum_{\omega\in T^{(n)}}\|\Phi_\omega'\|^t\le \sum_{\omega\in T^{(n)}}\left(\frac12\right)^{nt}=2^n\cdot\left(\frac12\right)^{nt}\to0
\end{equation}
as $n\to\infty$. Thus, we have $\beta(\Psi)\le 1$. Therefore, $\beta(\Psi)=1$.

Together with Theorem \ref{Eg:Dim}, we have proved our theorem.
\end{proof}

 \qed  
 
 \ 
 
\noindent {\bf Acknowledgements.} 
The author thanks Professor Hiroki Sumi for valuable comments.

\bibliography{bibTIFS}

@article {MR3449231,
    AUTHOR = {Rempe-Gillen, Lasse and Urba\'nski, Mariusz},
     TITLE = {Non-autonomous conformal iterated function systems and
              {M}oran-set constructions},
   JOURNAL = {Trans. Amer. Math. Soc.},
  FJOURNAL = {Transactions of the American Mathematical Society},
    VOLUME = {368},
      YEAR = {2016},
    NUMBER = {3},
     PAGES = {1979--2017},
      ISSN = {0002-9947,1088-6850},
   MRCLASS = {28A80 (37C45 37F10)},
  MRNUMBER = {3449231},
MRREVIEWER = {Manuel\ Mor\'an},
       DOI = {10.1090/tran/6490},
       URL = {https://doi-org.kyoto-u.idm.oclc.org/10.1090/tran/6490},
}

@book {MR1102677,
    AUTHOR = {Falconer, Kenneth},
     TITLE = {Fractal geometry},
      NOTE = {Mathematical foundations and applications},
 PUBLISHER = {John Wiley \& Sons, Ltd., Chichester},
      YEAR = {1990},
     PAGES = {xxii+288},
      ISBN = {0-471-92287-0},
   MRCLASS = {28A80 (00A69 11K55 28-01 58F13 60G18)},
  MRNUMBER = {1102677},
MRREVIEWER = {Christoph\ Bandt},
}

@book {MR2003772,
    AUTHOR = {Mauldin, R. Daniel and Urba\'nski, Mariusz},
     TITLE = {Graph directed {M}arkov systems},
    SERIES = {Cambridge Tracts in Mathematics},
    VOLUME = {148},
      NOTE = {Geometry and dynamics of limit sets},
 PUBLISHER = {Cambridge University Press, Cambridge},
      YEAR = {2003},
     PAGES = {xii+281},
      ISBN = {0-521-82538-5},
   MRCLASS = {37C45 (28A75 28A78 28A80 37B10 37C30 37F30 82B05)},
  MRNUMBER = {2003772},
MRREVIEWER = {Marc\ Kesseb\"ohmer},
       DOI = {10.1017/CBO9780511543050},
       URL = {https://doi-org.kyoto-u.idm.oclc.org/10.1017/CBO9780511543050},
}

@article {MR2431670,
    AUTHOR = {Barnsley, Michael F. and Hutchinson, John E. and Stenflo,
              \"Orjan},
     TITLE = {{$V$}-variable fractals: fractals with partial self
              similarity},
   JOURNAL = {Adv. Math.},
  FJOURNAL = {Advances in Mathematics},
    VOLUME = {218},
      YEAR = {2008},
    NUMBER = {6},
     PAGES = {2051--2088},
      ISSN = {0001-8708,1090-2082},
   MRCLASS = {28A80 (37C45 60D05)},
  MRNUMBER = {2431670},
MRREVIEWER = {Artemi\ Berlinkov},
       DOI = {10.1016/j.aim.2008.04.011},
       URL = {https://doi-org.kyoto-u.idm.oclc.org/10.1016/j.aim.2008.04.011},
}
\bibliographystyle{plain}
\bigskip

Division of Mathematical and Information Sciences

Graduate School of Human and Environmental Studies

Kyoto University

Yoshida-nihonmatsu-cho, Sakyo-ku, Kyoto, 606-8501, Japan

E-mail: ono.hiromichi.58c@st.kyoto-u.ac.jp
\end{document}